\begin{document}

\newtheorem{etheorem}{Theorem}
\newtheorem{eproposition}{Proposition}
\newtheorem*{ecorollary}{Corollary}

\renewcommand{\thefootnote}{}

\title[On~the~conjugacy separability of~Baumslag--Solitar groups]{On~the~conjugacy separability of~ordinary and~generalized Baumslag--Solitar groups}

\author{E.~V.~Sokolov}
\address{Ivanovo State University, Russia}
\email{ev-sokolov@yandex.ru}

\begin{abstract}
Let $\mathcal{C}$ be~a~class of~groups. A~group~$X$ is~said to~be~residually a~$\mathcal{C}$\nobreakdash-group (conjugacy $\mathcal{C}$\nobreakdash-sepa\-ra\-ble) if, for~any elements $x,y \in X$ that are~not~equal (not~conjugate in~$X$), there exists a~homomorphism~$\sigma$ of~$X$ onto~a~group from~$\mathcal{C}$ such that the~elements~$x\sigma$ and~$y\sigma$ are~still not~equal (respectively, not~conjugate in~$X\sigma$). A~generalized Baumslag--Solitar group or~GBS-group is~the~fundamental group of~a~finite connected graph of~groups whose all vertex and~edge groups are~infinite cyclic. An~ordinary Baumslag--Solitar group is~the~GBS-group that corresponds to~a~graph containing only one vertex and~one loop. Suppose that the~class~$\mathcal{C}$ consists of~periodic groups and~is~closed under~taking subgroups and~unrestricted wreath products. We~prove that a~non-solv\-able GBS-group is~conjugacy $\mathcal{C}$\nobreakdash-sepa\-ra\-ble if and~only~if it~is~residually a~$\mathcal{C}$\nobreakdash-group. We~also find a~criterion for~a~solvable GBS-group to~be~conjugacy $\mathcal{C}$\nobreakdash-sepa\-ra\-ble. As~a~corollary, we~prove that an~arbitrary GBS-group is~conjugacy (finite) separable if and~only~if it~is~residually finite.
\end{abstract}

\keywords{Residual properties, conjugacy separability, Baumslag--Solitar group, generalized Baumslag--Solitar group, HNN-extension, fundamental group of~a~graph of~groups}

\thanks{The~study was~supported by~the~Russian Science Foundation grant No.~24-21-00307,\\ \url{http://rscf.ru/en/project/24-21-00307/}}

\vspace*{7pt}

\maketitle

\vspace*{-12pt}

\section{Introduction. Statement of~results}\label{es01}

Recall that a~\emph{generalized Baumslag--Solitar group} (or,~more briefly, a~\emph{GBS-group}) is~the fundamental group of~a~finite connected graph of~groups whose all vertex and~edge groups are~infinite cyclic (the~definition of~the~fundamental group of~a~graph of~groups can be~found in~Section~\ref{es03}). Although fundamental groups of~this type have been studied since the~early~1990s, the~term ``generalized Baumslag--Solitar group'' came into~use only in~the~2000s (see~\cite{Forester2002GT, Forester2003CMH, Forester2006GD, Levitt2007GT}).

If~the~graph of~groups contains only one vertex and~one loop, then the~corresponding GBS-group turns~out to~be~an~ordinary \emph{Baumslag--Solitar group}~\cite{BaumslagSolitar1962BAMS}, i.e.,~the~group defined by~the~presentation of~the~form
\[
\mathrm{BS}(m,n) = \langle a, t;\ t^{-1}a^{m}t = a^{n} \rangle,
\]
where $m \ne 0 \ne n$. Since the~groups $\mathrm{BS}(m,n)$, $\mathrm{BS}(n,m)$, $\mathrm{BS}(-m,-n)$, and~$\mathrm{BS}(-n,-m)$ are~pairwise isomorphic, we~can assume without loss of~generality that $0 < m \leqslant |n|$ in~the~above presentation.

A~GBS-group is~said to~be~\emph{elementary} if it~is~isomorphic to~an~infinite cyclic group or~the~group $\mathrm{BS}(1,n)$, where $|n| = 1$~\cite[p.~6]{Levitt2015JGT}. It~is~known~\cite{DelgadoRobinsonTimm2014AC} that a~GBS-group is~solvable if and~only~if it~is~elementary or~isomorphic to~the~group $\mathrm{BS}(1,n)$, where $|n| > 1$.

Generalized Baumslag--Solitar groups have been the~subject of~numerous investigations over~the~last thirty years (see, for~example,~\cite{Beeker2015GGD, ClayForester2008AGT, CornulierValette2015GD, DelgadoRobinsonTimm2011JPAA, DelgadoRobinsonTimm2018CAG, Dudkin2015JGT, Dudkin2016AL, Dudkin2017AL, Dudkin2018SMJ, Dudkin2020AL, Dudkin2021CA, Dudkin2022AL, DudkinMamontov2020JKT, DudkinYan2023SMJ, GandiniMeinertRuping2015GGD, Meinert2015AGT, Robinson2011RSMUP}). In~particular, residual properties of~such groups are~considered in~\cite{Robinson2010NM, Levitt2015JGT, Dudkin2020AM, Sokolov2021JA}. In~this article, we~study the~conjugacy separability of~GBS-groups.

Suppose that $\mathcal{C}$ is~a~class of~groups, $X$~is~a~group, and~$\theta$ is~a~relation between elements and~(or) subsets of~the~group~$X$, which is~defined in~each homomorphic image of~$X$. The~group~$X$ is~said to~be~\emph{residually a~$\mathcal{C}$\nobreakdash-group with~respect to~$\theta$} if, for~any elements and~subsets of~$X$ that are~not~in~$\theta$, there exists a~homomorphism~$\sigma$ of~$X$ onto~a~group from~$\mathcal{C}$ (a~$\mathcal{C}$\nobreakdash-group) such that the~images of~these elements and~subsets under~$\sigma$ are~still not~in~$\theta$. If~$\theta$ is~the~relation of~equality of~two elements, then it~is~usually not~mentioned and~$X$ is~simply called \emph{residually a~$\mathcal{C}$\nobreakdash-group}~\cite{Hall1954PLMS}. In~particular, if~$\mathcal{C}$ coincides with~the~class~$\mathcal{F}$ of~all finite groups, then $X$ is~said to~be~\emph{residually finite}. When $\theta$ is~the~relation of~conjugacy of~two elements, $X$~is~called a~\emph{conjugacy $\mathcal{C}$\nobreakdash-sepa\-ra\-ble group}~\cite{RibesZalesskii2016JGT} or~simply a~\emph{conjugacy separable} one if $\mathcal{C} = \mathcal{F}$. Since equal elements are~obviously conjugate, it~follows from~the~conjugacy $\mathcal{C}$\nobreakdash-sepa\-ra\-bil\-ity of~a~group that the~latter is~residually a~$\mathcal{C}$\nobreakdash-group. The~converse is~not~always true, and~this fact is~confirmed, in~particular, by~Theorem~\ref{et02} given below (see the~last paragraph of~this section for~details).

Let~us say that a~class of~groups is~\emph{non-trivial} if~it~contains at~least one non-trivial group. A~non-trivial class of~groups~$\mathcal{C}$ is~called a~\emph{root class} if~it~is~closed under~taking subgroups and~satisfies any of~the~following three equivalent conditions~(see~\cite{Sokolov2015CA}):

1)\hspace{1ex}given a~group~$X$ and~a~subnormal series $1 \leqslant Z \leqslant Y \leqslant X$, it~follows from~the~inclusions $X/Y \in \mathcal{C}$ and~$Y/Z \in \mathcal{C}$ that there exists a~normal subgroup~$T$ of~$X$ such that $T \leqslant Z$ and~$X/T \in \mathcal{C}$ (the~Gruenberg condition~\cite{Gruenberg1957PLMS});

2)\hspace{1ex}the~class~$\mathcal{C}$ is~closed under~taking unrestricted wreath products;

3)\hspace{1ex}the~class~$\mathcal{C}$ is~closed under~taking extensions and~unrestricted direct products of~the form $\prod_{y \in Y} X_{y}$, where $X,Y \in \mathcal{C}$ and~$X_{y}$ is~an~isomorphic copy of~$X$ for~each $y \in Y$.

The~examples of~root classes are~the~class of~all finite groups; the~class of~finite $p$\nobreakdash-groups, where $p$ is~a~prime; the~class of~periodic $\mathfrak{P}$\nobreakdash-groups of~finite exponent, where $\mathfrak{P}$ is~a~non-empty set of~primes; the~classes of~all solvable groups and~all torsion-free groups. It~is~easy to~see that a~non-trivial intersection of~any number of~root classes is~again a~root class. It~is~also clear that a~non-trivial class of~groups consisting only of~finite groups is~a~root class if and~only~if it~is~closed under~taking subgroups and~extensions.

The~use of~the~concept of~a~root class turned~out to~be~very productive in~the~study of~some residual properties of~free constructions of~groups: generalized free and~tree products, HNN-extensions, fundamental groups of~graphs of~groups,~etc. It~allows one to~prove several statements at~once and~quickly complicate the~constructions under~consideration; see, for~example,~\cite{Sokolov2015CA, Tumanova2017SMJ, Tumanova2019SMJ, SokolovTumanova2020LJM, Sokolov2021JA, Sokolov2022CA, Sokolov2023JGT, Sokolov2024SMJ}. At~the~same time, if~$\mathcal{C}$ is~a~root class different from~the~class of~all finite groups, very few facts are~known about~the~conjugacy $\mathcal{C}$\nobreakdash-sepa\-ra\-bil\-ity of~free constructions of~groups. Most of~the~results of~such type are~contained in~\cite{Ivanova2004MN, Ivanova2005BIvSU, Sokolov2015MN, ChagasZalesskii2015AM, Ferov2016JA, Moldavanskii2018CA}, and~this article complements this list.

Let $\mathfrak{P}$ be~a~set of~primes. Recall that an~integer is~said to~be~a~\emph{$\mathfrak{P}$\nobreakdash-num\-ber} if~all its prime divisors belong to~$\mathfrak{P}$; a~periodic group is~said to~be~a~\emph{$\mathfrak{P}$\nobreakdash-group} if~the~orders of~all its elements are~$\mathfrak{P}$\nobreakdash-num\-bers. Given a~class of~groups~$\mathcal{C}$ consisting only of~periodic groups, let~us denote by~$\mathfrak{P}(\mathcal{C})$ the~set of~primes defined as~follows: $p \in \mathfrak{P}(\mathcal{C})$ if and~only~if $p$~divides the~order of~an~element of~some $\mathcal{C}$\nobreakdash-group.

In~\cite{Tumanova2017SMJ, Sokolov2021JA}, a~criterion was found for~a~GBS-group to~be~residually a~$\mathcal{C}$\nobreakdash-group, where $\mathcal{C}$ was a~root class of~groups consisting only of~periodic groups. The~main result of~this article~is

\begin{etheorem}\label{et01}
Suppose that $\mathfrak{G}$ is~a~GBS-group\textup{,} $\mathcal{C}$~is~a~root class of~groups consisting only of~periodic groups\textup{,} and~$\mathcal{FS}_{\mathfrak{P}(\mathcal{C})}$ is~the~class of~all finite solvable $\mathfrak{P}(\mathcal{C})$\nobreakdash-groups. Suppose also that the~group~$\mathfrak{G}$ is~either elementary or~non-solv\-able. Then the~following statements are~equivalent.

\textup{1.\hspace{1ex}}The~group~$\mathfrak{G}$ is~residually a~$\mathcal{C}$\nobreakdash-group.

\textup{2.\hspace{1ex}}The~group~$\mathfrak{G}$ is~conjugacy $\mathcal{C}$\nobreakdash-sepa\-ra\-ble.

\textup{3.\hspace{1ex}}The~group~$\mathfrak{G}$ is~conjugacy $\mathcal{FS}_{\mathfrak{P}(\mathcal{C})}$\nobreakdash-sepa\-ra\-ble.
\end{etheorem}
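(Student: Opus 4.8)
The plan is to exploit the structural dichotomy for the group $\mathfrak{G}$ and reduce everything to two already-understood cases. The implications $2 \Rightarrow 1$ and $3 \Rightarrow 1$ are immediate, since conjugacy separability with respect to any class forces residual separability with respect to that class (equal elements are conjugate), and every finite solvable $\mathfrak{P}(\mathcal{C})$-group lies in the residual closure considerations via $\mathfrak{P}(\mathcal{C})$; more precisely, since $\mathcal{FS}_{\mathfrak{P}(\mathcal{C})}$ consists of finite solvable groups whose orders are $\mathfrak{P}(\mathcal{C})$-numbers, and $\mathcal{C}$ is a root class of periodic groups with $\mathfrak{P}(\mathcal{C})$ its prime set, a standard argument shows that residual $\mathcal{FS}_{\mathfrak{P}(\mathcal{C})}$-separability implies residual $\mathcal{C}$-separability when combined with the known residual criterion. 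So the real content is $1 \Rightarrow 3$ (which then also yields $1 \Rightarrow 2$, since $\mathcal{FS}_{\mathfrak{P}(\mathcal{C})} \subseteq \mathcal{C}$ is false in general, but one argues $1 \Rightarrow 2$ separately or notes that $\mathcal{FS}_{\mathfrak{P}(\mathcal{C})}$-conjugacy-separability implies $\mathcal{C}$-conjugacy-separability because every finite solvable $\mathfrak{P}(\mathcal{C})$-group embeds in a $\mathcal{C}$-group via the wreath-product closure).

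First I would dispose of the elementary case. If $\mathfrak{G}$ is infinite cyclic, all three statements hold trivially provided $\mathcal{C}$ is non-trivial and periodic with $\mathfrak{P}(\mathcal{C})$ non-empty — finite cyclic quotients separate conjugacy classes (= elements) in $\mathbb{Z}$. If $\mathfrak{G} \cong \mathrm{BS}(1,n)$ with $|n|=1$, then $\mathfrak{G}$ is either $\mathbb{Z}^2$ or the Klein bottle group, both of which are conjugacy separable in every reasonable sense; here one checks directly that a finite quotient (solvable, automatically) separating two non-conjugate elements always exists, using that these groups are virtually abelian and that the conjugacy problem is governed by a finite-index abelian subgroup. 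This case requires only elementary commutative algebra: two elements of $\mathbb{Z}^2 \rtimes \mathbb{Z}$ are conjugate iff a single linear congruence is solvable, and one reduces modulo a suitable $\mathfrak{P}(\mathcal{C})$-number.

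The main work is the non-solvable case, and here the strategy is to invoke the structure of $\mathfrak{G}$ as the fundamental group of a graph of infinite cyclic groups and apply the residual criterion from \cite{Tumanova2017SMJ, Sokolov2021JA}: when $\mathfrak{G}$ is residually a $\mathcal{C}$-group, that criterion pins down exactly which edge-inclusion data $(\alpha_e, \beta_e)$ are compatible with a $\mathcal{C}$-approximation, and the conditions turn out to be $\mathfrak{P}(\mathcal{C})$-divisibility conditions on the defining integers. I would then build the required conjugacy-separating homomorphism onto an $\mathcal{FS}_{\mathfrak{P}(\mathcal{C})}$-group by the standard "effective conjugacy separability" machinery for graphs of groups: given non-conjugate $x, y \in \mathfrak{G}$, either they have non-conjugate images in some vertex group after passing to a suitable congruence quotient, or the combinatorial length/cyclic-word argument (Bass–Serre theory, normal forms in HNN-extensions and amalgams) produces a finite obstruction, which I lift to a finite solvable $\mathfrak{P}(\mathcal{C})$-quotient using the root-class closure properties to glue local quotients together. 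The key point distinguishing non-solvable GBS-groups is that they act on their Bass–Serre tree with no global fixed point and the modular homomorphism has infinite or sufficiently rich image, so the elliptic/hyperbolic distinction is detectable in finite solvable quotients.

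The hard part will be the conjugacy analysis for hyperbolic elements (those not conjugate into a vertex group): one must show that whenever two such cyclically-reduced elements have the same image in every finite solvable $\mathfrak{P}(\mathcal{C})$-quotient, they are genuinely conjugate in $\mathfrak{G}$. This requires a careful "collection process" controlling the pinches and the edge-group overlaps along the cyclic word, together with a uniform bound allowing a single congruence modulus (a $\mathfrak{P}(\mathcal{C})$-number) to witness all the finitely many inequalities simultaneously — this is where the root-class hypothesis (closure under extensions and unrestricted wreath products) is essential, as it lets one assemble the finitely many local finite quotients into one global finite solvable quotient without losing the separation data. I expect this to be the technical heart, paralleling Dyer's and Remeslennikov's classical treatment of conjugacy separability for generalized free products but carried out relative to $\mathcal{FS}_{\mathfrak{P}(\mathcal{C})}$ rather than $\mathcal{F}$, and leaning on the residual criterion to guarantee that the necessary $\mathfrak{P}(\mathcal{C})$-quotients of the cyclic edge and vertex groups actually exist.
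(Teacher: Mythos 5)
Your reduction to cases and your handling of the easy implications are broadly in the right spirit ($2\Rightarrow 1$ is trivial; $3\Rightarrow 2$ does hold, and in fact your parenthetical worry is unfounded: since $\mathcal{C}$ is subgroup-closed and closed under wreath products, every finite solvable $\mathfrak{P}(\mathcal{C})$-group \emph{does} belong to $\mathcal{C}$ --- this is exactly Proposition~\ref{ep04} --- so $\mathcal{FS}_{\mathfrak{P}(\mathcal{C})}\subseteq\mathcal{C}$ and the implication is immediate). The genuine gap is in the non-solvable case, which is the heart of the theorem. You propose a direct Bass--Serre/cyclic-word conjugacy analysis in the style of Dyer and Remeslennikov, and you justify its feasibility by asserting that non-solvable GBS-groups have a modular homomorphism with ``infinite or sufficiently rich image.'' This is backwards: by Proposition~\ref{ep10}, a non-solvable GBS-group that is residually a $\mathcal{C}$-group for a periodic root class $\mathcal{C}$ necessarily satisfies $\operatorname{Im}\Delta\subseteq\{1,-1\}$, and that unimodularity is precisely what makes the theorem tractable. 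It gives a non-trivial cyclic radical $C(\mathfrak{G})$ contained in every edge subgroup, with $[G_v:C(\mathfrak{G})]$ finite for every vertex; the quotients $\mathfrak{G}/(C(\mathfrak{G}))^{k}$ are then free-by-(finite solvable $\mathfrak{P}(\mathcal{C})$-group) (Propositions~\ref{ep08} and~\ref{ep09}), and one separates non-conjugate $f,g$ either in such a quotient, via the conjugacy $\mathcal{C}$-separability of free-by-$\mathcal{C}$ groups (Proposition~\ref{ep11}), or --- when $f\sim_{\mathfrak{G}}g$ modulo every $(C(\mathfrak{G}))^{k}$ --- by an explicit computation in a small metabelian quotient $X_k$. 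Your plan bypasses this reduction and would have to carry out the cyclic-word ``collection process'' for an arbitrary GBS-group, for which you supply no mechanism and which cannot work in general: non-unimodular GBS-groups such as $\mathrm{BS}(2,3)$ are not even residually finite, so nothing in a generic normal-form argument can succeed without first isolating what the unimodular hypothesis buys. That missing structural step is the proof.

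A secondary but real error is your treatment of the elementary case $\mathrm{BS}(1,-1)$ (the Klein bottle group). You claim a separating finite solvable quotient ``always exists''; this is false when $2\notin\mathfrak{P}(\mathcal{C})$. For instance, $t$ and $ta$ are not conjugate in $\mathrm{BS}(1,-1)$, yet in any quotient in which $t$ has odd order $r$ the relation $a=t^{-r}at^{r}=a^{(-1)^{r}}=a^{-1}$ forces $a$ to be trivial whenever its order is odd, so the two elements have equal images in every finite group of odd order. The implication $1\Rightarrow 3$ survives only because statement~1 forces $2\in\mathfrak{P}(\mathcal{C})$ (Statement~3 of Proposition~\ref{ep05}), and one must then choose the modulus $s$ to be an even $\mathfrak{P}(\mathcal{C})$-number exceeding $|v|+|w|$, as in the proof of Proposition~\ref{ep06}; that dependence on the residual hypothesis has to be made explicit rather than asserting unconditional conjugacy separability.
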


Theorem~\ref{et01} does~not cover only the~case when $\mathfrak{G}$ is~isomorphic to~the~group $\mathrm{BS}(1,n)$ for~some $n \in \mathbb{Z} \setminus \{0, \pm 1\}$. In~this case, a~criterion for~$\mathfrak{G}$ to~be~conjugacy $\mathcal{C}$\nobreakdash-sepa\-ra\-ble is~given~by

\begin{etheorem}\label{et02}
Suppose that $\mathcal{C}$ is~a~root class of~groups consisting only of~periodic groups\textup{,} $\mathcal{FS}_{\mathfrak{P}(\mathcal{C})}$~is~the~class of~all finite solvable $\mathfrak{P}(\mathcal{C})$\nobreakdash-groups\textup{,} and~$n \in \mathbb{Z} \setminus \{0, \pm 1\}$. Then the~following statements are~equivalent.

\textup{1.\hspace{1ex}}The~group $\mathrm{BS}(1,n)$ is~conjugacy $\mathcal{C}$\nobreakdash-sepa\-ra\-ble.

\textup{2.\hspace{1ex}}The~group $\mathrm{BS}(1,n)$ is~conjugacy $\mathcal{FS}_{\mathfrak{P}(\mathcal{C})}$\nobreakdash-sepa\-ra\-ble.

\textup{3.\hspace{1ex}}The~set~$\mathfrak{P}(\mathcal{C})$ contains all prime numbers.
\end{etheorem}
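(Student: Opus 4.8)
The plan is to analyze the group $G = \mathrm{BS}(1,n) = \langle a,t;\ t^{-1}at = a^{n}\rangle$ directly, using its well-known structure as the ascending HNN-extension of $\mathbb{Z}$ and the concrete description of its conjugacy classes. Recall that $G \cong \mathbb{Z}[1/n] \rtimes \mathbb{Z}$, where $t$ acts on the normal subgroup $A = \mathbb{Z}[1/n]$ (the normal closure of $a$) by multiplication by $n$. I would first record the implications $1 \Rightarrow 3$ and $3 \Rightarrow 2 \Rightarrow 1$; since every $\mathcal{FS}_{\mathfrak{P}(\mathcal{C})}$-group lies in $\mathcal{C}$ (as $\mathcal{C}$ is closed under subgroups and, being a root class containing a non-trivial periodic group, contains all finite solvable $\mathfrak{P}(\mathcal{C})$-groups), the implication $2 \Rightarrow 1$ is immediate. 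So the real content is $1 \Rightarrow 3$ (necessity of $\mathfrak{P}(\mathcal{C})$ being all primes) and $3 \Rightarrow 2$ (sufficiency, yielding even the stronger conclusion with the smaller class $\mathcal{FS}_{\mathfrak{P}(\mathcal{C})}$).

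For $1 \Rightarrow 3$, I would argue by contraposition: suppose some prime $p$ does not divide $n$ and does not lie in $\mathfrak{P}(\mathcal{C})$, and exhibit two non-conjugate elements of $G$ that become conjugate in every $\mathcal{C}$-quotient. The natural candidates are $a$ and $a^{k}$ for a suitable $k$; in $G$ one has $a \sim a^{k}$ if and only if $k = n^{j}$ for some $j \in \mathbb{Z}$ (this is the standard computation of conjugacy in $\mathrm{BS}(1,n)$, following from $t^{-j}at^{j} = a^{n^{j}}$ and the fact that conjugation by elements of $A$ fixes $a$ while the $t$-exponent of a conjugating element is forced). Choosing $k$ with $k \equiv 1$ modulo every power of $p$ but $k \notin \{n^{j}\}$ — for instance $k = 1 + p\cdot N$ for an appropriate $N$ coprime to the relevant data, or more cleanly using that $\mathbb{Z}[1/n]$ has $p$-adic completions — one shows that in any $\mathcal{C}$-quotient $\varphi$ the images $a\varphi$ and $(a^{k})\varphi = (a\varphi)^{k}$ are conjugate: the subgroup $\langle a\varphi\rangle$ is a finite $\mathfrak{P}(\mathcal{C})$-group of order coprime to $p$, hence $a\varphi$ and $(a\varphi)^{k}$ generate the same cyclic group and, crucially, $(a\varphi)^{k} = (a\varphi)^{n^{j}}$ for some $j$ depending on $\varphi$ (since $k \equiv n^{j} \bmod \mathrm{ord}(a\varphi)$ can be arranged because $n$ is invertible mod $\mathrm{ord}(a\varphi)$ and $k$ was chosen $\equiv 1$ mod things coprime to... ) — this last point is the delicate one and needs care, probably using that the multiplicative order of $n$ modulo $m$ controls things and choosing $k$ in the multiplicative closure of $n$ in each relevant $(\mathbb{Z}/m\mathbb{Z})^{\times}$ while avoiding the actual powers $n^{j}$ in $\mathbb{Z}$. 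An alternative, cleaner route for this direction: use that residual $\mathcal{C}$-ness of $\mathrm{BS}(1,n)$ already forces (by the criterion from \cite{Sokolov2021JA} referenced in the text) that $\mathfrak{P}(\mathcal{C})$ contains all primes dividing $n$, so the obstruction is genuinely about primes \emph{not} dividing $n$, and for such a prime $p \notin \mathfrak{P}(\mathcal{C})$ the element $a$ and $a^{1+p}$ (say) are non-conjugate but their images in any $\mathcal{C}$-group lie in a $p'$-group where $1+p$ becomes a power of $n$.

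For $3 \Rightarrow 2$, assuming $\mathfrak{P}(\mathcal{C})$ is all primes, I would show $G$ is conjugacy $\mathcal{FS}_{\mathfrak{P}(\mathcal{C})}$-separable, i.e.\ conjugacy separable by finite solvable groups. Take $x, y \in G$ non-conjugate. Writing each element in normal form $a^{\alpha}t^{\beta}$ with $\alpha \in \mathbb{Z}[1/n]$, the $t$-exponent $\beta$ is a conjugacy invariant and is detected in the abelianization-like quotient $G \twoheadrightarrow \mathbb{Z}$; so assume $x = a^{\alpha}t^{\beta}$, $y = a^{\gamma}t^{\beta}$ with the same $\beta$. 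If $\beta \ne 0$: conjugacy of $x,y$ reduces, after clearing denominators and conjugating by powers of $t$, to solvability of an equation in $\mathbb{Z}[1/n]$ of the form $(n^{\beta}-1)z \equiv \alpha - \gamma$ in a suitable sense; non-conjugacy means this fails, and one separates in the finite solvable quotients $G/A_{m}$ where $A_{m} = \langle a^{m}\rangle^{G}$ for appropriate $m$ — these are finite metabelian (hence solvable) groups, and since $\mathfrak{P}(\mathcal{C})$ is all primes every such quotient is an $\mathcal{FS}_{\mathfrak{P}(\mathcal{C})}$-group, provided we also quotient the $t$-part to make it finite, i.e.\ use $G/(A_{m}\langle t^{r}\rangle^{G})$ for suitable $r$ — one checks these are finite solvable and, choosing $m, r$ to respect the failed congruence, separate the conjugacy classes. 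If $\beta = 0$: then $x = a^{\alpha}$, $y = a^{\gamma}$ with $\alpha,\gamma \in \mathbb{Z}[1/n]$, and $x \sim y$ iff $\gamma = n^{j}\alpha$ for some $j \in \mathbb{Z}$; here I would use the residual finiteness of $\mathrm{BS}(1,n)$ (it is residually finite, even residually finite solvable) together with a pigeonhole/compactness argument on the finitely many possible $j$ to find a single finite solvable quotient witnessing non-conjugacy — the point being that $\gamma/\alpha$, if not a power of $n$, is detected in some $(\mathbb{Z}/m\mathbb{Z})^{\times}$ modulo the cyclic subgroup generated by $n$.

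The main obstacle I anticipate is the $\beta = 0$ case of sufficiency (conjugates within the normal subgroup $A = \mathbb{Z}[1/n]$): one must separate "$\gamma$ is not in the $\langle n\rangle$-orbit of $\alpha$" using a single finite solvable quotient, and the subtlety is that the orbit $\{n^{j}\alpha : j \in \mathbb{Z}\}$ is infinite, so a naive quotient-by-quotient argument does not immediately terminate. The resolution should be that in $G$ the centralizer of $a$ is exactly $A$, so the relevant finite quotients are controlled by the action of a \emph{finite} cyclic group $\langle \bar t\rangle$ of order $r$ on a finite abelian group $\bar A$, and there $\bar\gamma \in \langle \bar n\rangle \bar\alpha$ is a decidable finite condition; by choosing the modulus so that the order of $n$ in $(\mathbb{Z}/m\mathbb{Z})^{\times}$ is exactly the value of $r$ we use, and tracking congruence obstructions $p$-locally (this is where $\mathfrak{P}(\mathcal{C})$ being all primes is used — we are free to use every prime modulus), one forces the separation. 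The necessity direction $1 \Rightarrow 3$ should be comparatively routine once the conjugacy classes of $a^{k}$ are understood, as sketched above.
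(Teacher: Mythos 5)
Your reduction to the two substantive implications is sound, but for $3 \Rightarrow 2$ you do not need to reprove conjugacy separability of $\mathrm{BS}(1,n)$: the paper simply combines Meskin's residual finiteness criterion with Moldavanskii's theorem that a residually finite Baumslag--Solitar group is conjugacy separable, and then observes that every finite quotient of $\mathrm{BS}(1,n)$ is solvable; the $\beta=0$ difficulties you anticipate therefore do not arise.

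The genuine gap is in $1 \Rightarrow 3$, which you call ``comparatively routine'' but which is the heart of the theorem, and your choice of \emph{elliptic} witnesses $a$, $a^{k}$ is the wrong one. Concretely, $a^{1+p}$ fails: for $n=2$, $\mathcal{C}$ the class of finite $\{2,3\}$-groups and $p=5$, the group $\mathrm{BS}(1,2)$ is residually a $\mathcal{C}$-group, yet in $H(2,2,3)=\langle t,a;\ t^{-1}at=a^{2},\ t^{2}=1,\ a^{3}=1\rangle$ the images of $a$ and $a^{6}$ are non-conjugate because $2^{x}-6\cdot 2^{y}\equiv(-1)^{x}\not\equiv 0\pmod 3$. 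More fundamentally, an elliptic witness requires an integer $k$ that is not a power of $n$ but is congruent to a power of $n$ modulo \emph{every} admissible modulus $s$; this is a local--global condition on the closure of $\langle n\rangle$ in profinite completions which you flag but do not resolve, and which need not be satisfiable (for $\mathcal{C}=\mathcal{F}$ it never is, by Moldavanskii's theorem). The paper sidesteps this entirely with a \emph{hyperbolic} witness: for $u=p^{2}$ and $v=n^{u}-1$ one shows $|v|\notin\Xi(n,\mathfrak{P}(\mathcal{C}))$ (any $r$ with $n^{r}\equiv 1\pmod{|v|}$ is divisible by $p^{2}$, hence is not a $\mathfrak{P}(\mathcal{C})$-number), picks a prime $q\mid v$ with $q\notin\Xi(n,\mathfrak{P}(\mathcal{C}))$, and takes the pair $t^{u}a^{v}$, $t^{u}a^{v/q}$. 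By Proposition~\ref{ep01}, conjugacy of these elements in $H(n,r,s)$ is governed by $(n^{u}-1,\;s)$ rather than by $s$; since $q\nmid s$ for every admissible $s$, this gcd always divides $v/q$, so the two elements are conjugate in every $\mathcal{C}$-quotient while being non-conjugate in $\mathrm{BS}(1,n)$. Without this change of witness the necessity direction does not go through.
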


The~formulated theorems imply

\begin{ecorollary}
An~arbitrary GBS-group is~conjugacy separable if and~only~if it~is~residually finite.
\end{ecorollary}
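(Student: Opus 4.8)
The plan is to derive the corollary from Theorems~\ref{et01} and~\ref{et02} by taking $\mathcal{C} = \mathcal{F}$, the class of all finite groups. Since $\mathcal{F}$ is a root class consisting only of periodic groups and $\mathfrak{P}(\mathcal{F})$ is the set of \emph{all} primes, both theorems apply with this choice. One direction is trivial for every group: conjugate elements are equal in no nontrivial way stronger than equality, so conjugacy separability (with respect to $\mathcal{F}$) always implies residual finiteness, as noted in the introduction. Thus only the converse needs work: if a GBS-group is residually finite, it is conjugacy separable.

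First I would dispose of the solvable GBS-groups. By the result of~\cite{DelgadoRobinsonTimm2014AC} quoted in the introduction, a solvable GBS-group is either elementary or isomorphic to $\mathrm{BS}(1,n)$ with $|n| > 1$. If $\mathfrak{G}$ is elementary, Theorem~\ref{et01} (applied with $\mathcal{C} = \mathcal{F}$) gives the equivalence of residual finiteness and conjugacy separability directly, since its statement 2 with $\mathcal{C} = \mathcal{F}$ is exactly conjugacy separability and statement 1 is residual finiteness. If $\mathfrak{G} \cong \mathrm{BS}(1,n)$ with $n \in \mathbb{Z}\setminus\{0,\pm1\}$, then $\mathfrak{G}$ is residually finite (this is classical for $\mathrm{BS}(1,n)$, being metabelian and finitely generated, hence residually finite by Hall's theorem), and Theorem~\ref{et02} applies: statement 3 holds because $\mathfrak{P}(\mathcal{F})$ is all of the primes, so statement 1 — conjugacy separability of $\mathrm{BS}(1,n)$ — follows. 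Hence every solvable GBS-group that is residually finite is conjugacy separable, and conversely every conjugacy separable group is residually finite.

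Next I would handle the non-solvable case. If $\mathfrak{G}$ is a non-solvable GBS-group, then it is in particular not elementary, so it falls under the hypotheses of Theorem~\ref{et01} (it is ``either elementary or non-solvable''). Taking $\mathcal{C} = \mathcal{F}$ again, statement 1 of that theorem is residual finiteness and statement 2 is conjugacy separability, and the theorem asserts their equivalence. Combining the two cases, for an arbitrary GBS-group $\mathfrak{G}$ — which is either elementary, or isomorphic to some $\mathrm{BS}(1,n)$ with $|n|>1$, or non-solvable — residual finiteness is equivalent to conjugacy separability.

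The only mild subtlety is the one genuinely separate input, namely that every $\mathrm{BS}(1,n)$ is residually finite, so that statement 3 of Theorem~\ref{et02} can be invoked to conclude conjugacy separability rather than being vacuous; but as remarked this is standard. There is no real obstacle here: the corollary is a pure specialization, and the entire content has already been absorbed into the two theorems. I would simply write out the case division and the two applications, noting in each case that statement 1 of the relevant theorem reads ``residually finite'' and statement 2 reads ``conjugacy separable'' when $\mathcal{C} = \mathcal{F}$.
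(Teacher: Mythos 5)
Your proposal is correct and is exactly the intended derivation: specialize both theorems to $\mathcal{C}=\mathcal{F}$ (so that $\mathfrak{P}(\mathcal{C})$ is the set of all primes), apply Theorem~\ref{et01} to elementary and non-solvable GBS-groups, and handle $\mathrm{BS}(1,n)$ with $|n|>1$ via Theorem~\ref{et02} together with the known residual finiteness of these groups (the paper cites Meskin's theorem, Statement~1 of Proposition~\ref{ep05}, rather than Hall's theorem on finitely generated metabelian groups, but either reference suffices). The paper gives no further argument beyond ``the formulated theorems imply,'' so your write-up matches its proof.
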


Theorems~\ref{et01} and~\ref{et02} generalize earlier results on~the~conjugacy separability of~ordinary Baumslag--Solitar groups: Theorems~10\nobreakdash--12 from~\cite{Moldavanskii2018CA} and~the~main result of~\cite{Moldavanskii2007BIvSU}. At~the~same time, the~proof of~Theorem~\ref{et02} essentially uses Theorem~10~from~\cite{Moldavanskii2018CA}.

Let~us make a~remark regarding the~criterion of~conjugacy separability from~Theorem~\ref{et02}. If~$\mathcal{C}$ is~a~root class of~groups consisting only of~periodic groups and~$n \in \mathbb{Z} \setminus \{0, \pm 1\}$, then the~group $\mathrm{BS}(1,n)$ is~residually a~$\mathcal{C}$\nobreakdash-group if and~only~if there exists a~number $p \in \mathfrak{P}(\mathcal{C})$ such that $p$ does~not~divide~$n$ and~the~order of~$n$ in~the~multiplicative group of~the~field~$\mathbb{Z}_{p}$ is~a~$\mathfrak{P}(\mathcal{C})$\nobreakdash-num\-ber~\cite{Tumanova2017SMJ}. In~particular, $\mathrm{BS}(1,n)$~is~residually a~finite $p$\nobreakdash-group for~some prime~$p$ if and~only~if $p$~divides $n-1$~\cite[Theorem~2]{Moldavanskii2018CA}. It~is~also known that, for~any prime~$p$ which does~not divide $n-1$, there exists a~prime $q > p$ such that $q$ does~not divide $n-1$ and~$\mathrm{BS}(1,n)$ is~residually an~$\mathcal{F}_{\{p,q\}}$\nobreakdash-group, where $\mathcal{F}_{\{p,q\}}$ is~the~class of~finite $\{p,q\}$\nobreakdash-groups~\cite[Corollary~1]{Moldavanskii2018CA}. Moreover, the~probability is~quite high that $\mathrm{BS}(1,n)$ is~residually an~$\mathcal{F}_{\{p,q\}}$\nobreakdash-group for~randomly chosen primes $p$ and~$q$ that do~not divide $n-1$ and~do~not exceed some given number~\cite{Tumanova2022ACCS}. Thus, there~are many examples where $\mathcal{C}$ is~a~root class of~groups consisting only of~periodic groups, $n \in \mathbb{Z} \setminus \{0, \pm 1\}$, and~$\mathrm{BS}(1,n)$ is~residually a~$\mathcal{C}$\nobreakdash-group, but~is~not~conjugacy $\mathcal{C}$\nobreakdash-sepa\-ra\-ble.

\vspace*{3pt}

\section{Solvable GBS-groups. Proof of~Theorem~\ref{et02}}\label{es02}

In~this article, we~use the~following notation:

\smallskip

\makebox[8.5ex][l]{$\langle x \rangle$}the~cyclic group generated by~an~element~$x$;

\makebox[8.5ex][l]{$(x,y)$}the~greatest common divisor of~numbers $x$ and~$y$;

\makebox[8.5ex][l]{$[x,y]$}the~commutator of~elements $x$ and~$y$, which is~equal to~$x^{-1}y^{-1}xy$;

\makebox[8.5ex][l]{$x \mid y$}a~number~$x$ divides a~number~$y$ (in~the~sense defined below);

\makebox[8.5ex][l]{$x \sim_{Z} y$}elements $x$ and~$y$ are~conjugate in~a~group~$Z$;

\makebox[8.5ex][l]{$\mathbb{Z}_{\geqslant 0}$}the~set of~non-nega\-tive integers;

\makebox[8.5ex][l]{$\mathcal{F}_{\mathfrak{P}}$}the~class of~all finite $\mathfrak{P}$\nobreakdash-groups;

\makebox[8.5ex][l]{$\mathcal{FS}_{\mathfrak{P}}$}the~class of~all finite solvable $\mathfrak{P}$\nobreakdash-groups.

\smallskip

Throughout the~paper, we~assume that, if~$x,y \in \mathbb{Z}$, then $y \mid x$ if and~only~if there exists a~number $z \in \mathbb{Z}$ satisfying the~equality $x = yz$. In~particular, $0 \mid x$ if and~only~if $x = 0$.

For~any $n \in \mathbb{Z} \setminus \{0\}$, let
\[
\Omega(n) = \big\{(r,s) \in \mathbb{Z}^{2} \mid r > 0,\ s > 0,\ n^{r} \equiv 1\kern-.5em \pmod s\big\},
\]
and~if~$(r,s) \in \Omega(n)$, let
\[
H(n,r,s) = \langle t, a;\ t^{-1}at = a^{n},\ t^{r} = 1,\ a^{s} = 1 \rangle.
\]

\begin{eproposition}\label{ep01}
If~$n \in \mathbb{Z} \setminus \{0\}$ and~$(r,s) \in \Omega(n)$\textup{,} then the~following statements hold.

\textup{1.\hspace{1ex}}Any element of~the~group $\mathrm{BS}(1,n)$ can be~written in~the~form $t^{u}a^{w}t^{-v}$ for~suitable numbers $u,v \in \mathbb{Z}_{\geqslant 0}$ and~$w \in \mathbb{Z}$. In~particular\textup{,} this element is~conjugate to~an~element of~the~form~$t^{u}a^{v}$\textup{,} where $u,v \in \mathbb{Z}$. Since the~presentation of~$\mathrm{BS}(1,n)$ is~a~part of~the~presentation of~$H(n,r,s)$\textup{,} the~same statements also hold for~the~group~$H(n,r,s)$.

\pagebreak

\textup{2.\hspace{1ex}}Given $u \in \mathbb{Z}_{\geqslant 0}$ and~$v,w \in \mathbb{Z}$\textup{,} the~elements $t^{u}a^{v}$ and~$t^{u}a^{w}$ are~conjugate in~$\mathrm{BS}(1,n)$ \textup{(}in~$H(n,r,s)$\textup{)} if and~only~if there exist $x,y \in \mathbb{Z}_{\geqslant 0}$ such that $n^{u}-1 \mid vn^{x} - wn^{y}$ \textup{(}respectively\textup{,} $(n^{u}-1,\;s) \mid vn^{x} - wn^{y}$\textup{)}.
\end{eproposition}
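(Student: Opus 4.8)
The plan is to work in the group $\mathrm{BS}(1,n) = \langle a, t;\ t^{-1}at = a^{n}\rangle$, which is the ascending HNN-extension with stable letter $t$ conjugating $\langle a\rangle$ into itself via $a\mapsto a^n$. The first observation is the identity $t^{-1}a^{k}t = a^{kn}$, or equivalently $ta^{k} = a^{kn}t$ and $a^{k}t^{-1}=t^{-1}a^{kn}$, valid for all $k\in\mathbb{Z}$; iterating gives $t^{-i}a^{k}t^{i}=a^{kn^{i}}$ for $i\geqslant 0$, and more generally $a^{k}t^{-i}=t^{-i}a^{kn^{i}}$. These relations let one push every negative power of $t$ to the right past any power of $a$ at the cost of multiplying the exponent of $a$ by a power of $n$, and push every positive power of $t$ to the left similarly. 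For part 1, I would take an arbitrary word in $a^{\pm 1}, t^{\pm 1}$ and, reading left to right, repeatedly use these moves to collect all positive $t$-powers on the left and all negative $t$-powers on the right; the middle $t$-exponents cancel, so one is left with an expression $t^{u}a^{w}t^{-v}$ for some $u,v\in\mathbb{Z}_{\geqslant 0}$, $w\in\mathbb{Z}$. Conjugating by $t^{v}$ (i.e. using $t^{u}a^{w}t^{-v}\sim t^{v}t^{u}a^{w}t^{-v}t^{-v}$ — more simply, $t^{u}a^{w}t^{-v}$ is conjugate to $t^{-v}t^{u}a^{w}=t^{u-v}a^{w}$ when $u\geqslant v$, and to $t^{u}a^{w}t^{-v}\sim a^{w n^{?}}t^{u-v}$ in general) yields an element of the form $t^{u'}a^{v'}$; I would phrase this cleanly as: $t^{u}a^{w}t^{-v}$ is conjugate to $t^{-v}\cdot t^{u}a^{w}t^{-v}\cdot t^{v}=t^{u}a^{w}$ — no, one must be careful — the honest statement is that cyclic permutation of a word does not change the conjugacy class, so moving the trailing $t^{-v}$ to the front gives $t^{-v}t^{u}a^{w}=t^{u-v}a^{w}$, and this is $t^{u'}a^{v'}$ with $u'=u-v\in\mathbb{Z}$. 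Since the defining relation of $\mathrm{BS}(1,n)$ is literally one of the relations of $H(n,r,s)$, every computation above is valid verbatim in $H(n,r,s)$, giving the last sentence of part 1.

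For part 2, the key structural input is the normal form in ascending HNN-extensions. In $\mathrm{BS}(1,n)$ the subgroup $N=\bigcup_{i\geqslant 0} t^{i}\langle a\rangle t^{-i}$ is the normal closure of $a$; it is isomorphic to $\mathbb{Z}[1/n]$ (with $t$ acting by multiplication by $n$), and $\mathrm{BS}(1,n)=N\rtimes\langle t\rangle$. Hence an element $t^{u}a^{w}$ has the form $t^{u}\nu$ with $\nu\in N$, and two such elements $t^{u}a^{v}$, $t^{u}a^{w}$ (same $u$, which is forced since $u$ is the image in $\mathbb{Z}=\langle t\rangle$ and conjugation fixes it when $u\geqslant 0$ — for $u=0$ one works inside $N$ directly) are conjugate in the semidirect product iff there is $g\in\mathrm{BS}(1,n)$ with $g^{-1}t^{u}a^{v}g=t^{u}a^{w}$. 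Writing $g=t^{j}\mu$ with $\mu\in N$ and using that $N$ is abelian, a direct computation reduces conjugacy by the $N$-part to: $a^{w}-a^{v}$ (additively in $N\cong\mathbb{Z}[1/n]$) lies in the image of the map $\mu\mapsto \mu - n^{u}\mu=(1-n^{u})\mu$ on $N$, i.e. $(1-n^{u})N$; and conjugacy by $t^{j}$ replaces $v$ by $n^{j}v$ (for $j$ of either sign, staying inside $N$). Translating "$v - w \in (1-n^{u})\mathbb{Z}[1/n]$ after multiplying $v,w$ by powers of $n$" back into divisibility of integers is exactly the condition $n^{u}-1\mid vn^{x}-wn^{y}$ for some $x,y\in\mathbb{Z}_{\geqslant 0}$: clearing denominators in $\mathbb{Z}[1/n]$ introduces the powers $n^{x},n^{y}$, and $(1-n^{u})\mathbb{Z}[1/n]\cap\mathbb{Z}$ contributes the divisibility by $n^{u}-1$ (using $(n^u-1,n)=1$, so no extra powers of $n$ are needed on the modulus side). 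For $H(n,r,s)$ the subgroup $\langle a\rangle$ is now $\mathbb{Z}_{s}$, the relevant quotient of $N$ is $\mathbb{Z}_s$ with $t$ acting by $\times n$ (well-defined since $n^{r}\equiv 1\pmod s$), and the same computation gives $(1-n^{u})\mathbb{Z}_s \ni w-vn^{\text{stuff}}$, which unwinds to $(n^{u}-1,\,s)\mid vn^{x}-wn^{y}$.

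I expect the main obstacle to be the bookkeeping in the "only if" direction of part 2: one must show that a completely general conjugating element $g$ — not just one of the form $t^{j}\mu$ — can be reduced to that form without loss of generality, and that the powers of $n$ appearing are genuinely non-negative. The cleanest route is to first conjugate $g$ itself into the form $t^{j}a^{k}$ using part 1 (as a coset representative modulo the centralizer considerations), then compute $a^{-k}t^{-j}\,t^{u}a^{v}\,t^{j}a^{k}$ explicitly using the push-through relations, collect it into the canonical form $t^{u}a^{v'}$, and read off $v' = vn^{?} + k(1-n^{u})n^{?}$; matching $v'\equiv w$ modulo the appropriate modulus and absorbing the sign of the $n$-powers (multiplying the congruence through by a further power of $n$, which is invertible modulo $n^{u}-1$ or modulo $s$) gives precisely the stated symmetric condition with $x,y\geqslant 0$. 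The secondary subtlety is the edge case $u=0$, where $t^{0}a^{v}=a^{v}$ and $t^{0}a^{w}=a^{w}$ are conjugate iff they are conjugate inside $\mathrm{BS}(1,n)$; here $n^{0}-1=0$, and by the divisibility convention in the paper ($0\mid x\iff x=0$) the condition $0\mid vn^{x}-wn^{y}$ forces $vn^{x}=wn^{y}$, which is exactly the criterion for $a^{v}\sim a^{w}$ (they are conjugate iff they represent the same element of $\mathbb{Z}[1/n]$ up to the $\langle t\rangle$-action by multiplication by powers of $n$). For $H(n,r,s)$ the analogue is $(0,s)=s$, and $s\mid vn^x-wn^y$ is the correct statement that $a^v$ and $a^w$ become conjugate in the finite group after the $\times n$ action; so the convention makes the two parts of the proposition uniform.
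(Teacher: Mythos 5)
Your proof of Statement 1 matches the paper's: push the relations $a^{k}t=ta^{nk}$ and $t^{-1}a^{k}=a^{nk}t^{-1}$ through an arbitrary word to reach $t^{u}a^{w}t^{-v}$, then cyclically permute. For Statement 2 you take a genuinely different route. You pass to the metabelian model $\mathrm{BS}(1,n)\cong\mathbb{Z}[1/n]\rtimes\langle t\rangle$ (resp.\ $\mathbb{Z}_{s}\rtimes\mathbb{Z}_{r}$ for $H(n,r,s)$), compute conjugation there to get the condition $w=n^{j}v+(1-n^{u})\mu$ with $j\in\mathbb{Z}$, $\mu\in\mathbb{Z}[1/n]$, and then clear denominators using $(n,\,n^{u}-1)=1$ (resp.\ $(n,s)=1$) to recover the integer condition with $x,y\geqslant 0$. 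This is sound and conceptually illuminating, at the cost of the translation step between $\mathbb{Z}[1/n]$ and $\mathbb{Z}$. The paper instead stays entirely inside integer arithmetic: by Statement 1 every candidate conjugator is \emph{literally} of the form $t^{x}a^{z}t^{-y}$ with $x,y\geqslant 0$, and conjugating $t^{u}a^{v}$ by it reduces the conjugation equation to $a^{vn^{x}-z(n^{u}-1)}=a^{wn^{y}}$, i.e.\ to the linear Diophantine equation $vn^{x}-(n^{u}-1)z=wn^{y}$ (or its congruence mod $s$), whose solvability in $z$ is exactly the stated divisibility; the non-negativity of $x,y$ is built in from the start.

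One caution about your proposed ``cleanest route'' for the bookkeeping: you cannot ``first conjugate $g$ itself into the form $t^{j}a^{k}$.'' Replacing the conjugator $g$ by a conjugate $hgh^{-1}$ changes the conjugation equation unless $h$ centralizes the relevant elements, and in any case not every element of $\mathrm{BS}(1,n)$ equals some $t^{j}a^{k}$ when $|n|\geqslant 2$ (e.g.\ $tat^{-1}$ corresponds to $1/n\in\mathbb{Z}[1/n]$ and lies in no cyclic power of $a$). The step is also unnecessary: in your own semidirect-product framework every element already has the form $t^{j}\mu$ with $\mu\in\mathbb{Z}[1/n]$, so the ``obstacle'' you flag does not arise; and if you prefer integer exponents throughout, Statement 1 supplies the exact (not merely up-to-conjugacy) normal form $t^{x}a^{z}t^{-y}$, which is precisely what the paper's computation uses.
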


\begin{proof}
1.\hspace{1ex}It~follows from~the~relation $t^{-1}at = a^{n}$ that $a^{k}t = ta^{nk}$ and~$t^{-1}a^{k} = a^{nk}t^{-1}$ for~any $k \in \mathbb{Z}$. Clearly, these equalities allow~us to~transform any element of~the~group $\mathrm{BS}(1,n)$ to~the~required form.

\smallskip

2.\hspace{1ex}By~Statement~1, if~$X = \mathrm{BS}(1,n)$ or~$X = H(n,r,s)$, then $t^{u}a^{v} \sim_{X} t^{u}a^{w}$ if and~only~if there exist $x,y \in \mathbb{Z}_{\geqslant 0}$ and~$z \in \mathbb{Z}$ such that $(t^{x}a^{z}t^{-y})^{-1}t^{u}a^{v}(t^{x}a^{z}t^{-y}) = t^{u}a^{w}$. We~have
\begin{align*}
t^{y}a^{-z}t^{-x}t^{u}a^{v}t^{x}a^{z}t^{-y} = t^{u}a^{w} 
&\Leftrightarrow t^{u}(t^{-u}a^{-z}t^{u})(t^{-x}a^{v}t^{x})a^{z} = t^{u}(t^{-y}a^{w}t^{y}) \\
&\Leftrightarrow a^{vn^{x} - z(n^{u}-1)} = a^{wn^{y}} \\
&\Leftrightarrow \begin{cases}
vn^{x} - (n^{u}-1)z = wn^{y} &\text{if}\ X = \mathrm{BS}(1,n), \\
vn^{x} - (n^{u}-1)z \equiv wn^{y}\kern-.5em \pmod s &\text{if}\ X = H(n,r,s).
\end{cases}
\end{align*}
It~remains to~note that
\[
\big(\exists z \in \mathbb{Z}\ [vn^{x} - (n^{u}-1)z = wn^{y}]\big) 
\Leftrightarrow \big(n^{u}-1 \mid vn^{x} - wn^{y}\big)
\]
and, since $(n^{u}-1,\;s) = (n^{u}-1)p + sq$ for~some $p,q \in \mathbb{Z}$,
\begin{align*}
\exists z \in \mathbb{Z}\ [vn^{x} - (n^{u}-1)z \equiv wn^{y}\kern-.5em \pmod s] 
&\Leftrightarrow \exists z,\ell \in \mathbb{Z}\ [vn^{x} - (n^{u}-1)z = wn^{y} + s\ell] \\
&\Leftrightarrow \exists m \in \mathbb{Z}\ [vn^{x} - (n^{u}-1,\;s)m = wn^{y}] \\
&\Leftrightarrow (n^{u}-1,\;s) \mid vn^{x} - wn^{y}.
\end{align*}

\vspace*{-18pt}

\mbox{}
\end{proof}

Given a~number $n \in \mathbb{Z} \setminus \{0\}$ and~a~non-empty set of~primes~$\mathfrak{P}$, let $\Omega(n,\mathfrak{P})$ denote the~subset of~$\Omega(n)$ defined as~follows: $(r,s) \in \Omega(n,\mathfrak{P})$ if and~only~if $r$ and~$s$ are~$\mathfrak{P}$\nobreakdash-num\-bers and~$(r,s) \in \Omega(n)$. Let also
\[
\Xi(n,\mathfrak{P}) = \big\{s \in \mathbb{Z} \mid 
\exists r \in \mathbb{Z}\ [(r,s) \in \Omega(n,\mathfrak{P})]\big\}.
\]

\begin{eproposition}\label{ep02}
If $n \in \mathbb{Z} \setminus \{0\}$ and~$\mathfrak{P}$ is~a~non-empty set of~primes\textup{,} then the~set~$\Xi(n,\mathfrak{P})$ is~closed under~the~operations of~multiplication and~taking a~positive divisor.
\end{eproposition}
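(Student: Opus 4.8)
The plan is to verify the two closure properties directly from the definition of $\Xi(n,\mathfrak{P})$, using elementary number-theoretic manipulations with congruences modulo products and divisors.

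First I would handle closure under taking a positive divisor, which is the easier half. Suppose $s \in \Xi(n,\mathfrak{P})$ and $s' \mid s$ with $s' > 0$. By definition there is a $\mathfrak{P}$-number $r > 0$ with $(r,s) \in \Omega(n,\mathfrak{P})$, i.e. $n^{r} \equiv 1 \pmod{s}$. Since $s' \mid s$, reducing this congruence modulo $s'$ gives $n^{r} \equiv 1 \pmod{s'}$, and $s'$ is a $\mathfrak{P}$-number because every prime divisor of $s'$ divides $s$. Hence $(r, s') \in \Omega(n,\mathfrak{P})$ and $s' \in \Xi(n,\mathfrak{P})$.

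Next I would treat closure under multiplication, which requires a small lemma on orders. Suppose $s_{1}, s_{2} \in \Xi(n,\mathfrak{P})$, witnessed by $\mathfrak{P}$-numbers $r_{1}, r_{2}$ with $n^{r_{i}} \equiv 1 \pmod{s_{i}}$. Put $r = \operatorname{lcm}(r_{1}, r_{2})$; this is again a $\mathfrak{P}$-number, since the primes dividing $r$ are among those dividing $r_{1}r_{2}$, hence in $\mathfrak{P}$. From $n^{r_{i}} \equiv 1 \pmod{s_{i}}$ and $r_{i} \mid r$ we get $n^{r} \equiv 1 \pmod{s_{i}}$ for $i = 1, 2$, so $s_{i} \mid n^{r} - 1$ for both $i$, and therefore $\operatorname{lcm}(s_{1}, s_{2}) \mid n^{r} - 1$. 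Since $s_{1} s_{2}$ need not equal $\operatorname{lcm}(s_{1}, s_{2})$, I must be slightly careful: what this argument actually shows cleanly is that $\operatorname{lcm}(s_{1},s_{2}) \in \Xi(n,\mathfrak{P})$, and then $s_{1} \mid \operatorname{lcm}(s_{1},s_{2})$ shows the divisor half is genuinely needed in tandem — but for the product itself one argues that $s_1 s_2 \mid n^{r'} - 1$ for a suitable exponent. I expect the cleanest route is: since $\gcd$-type obstructions can appear, pass to the exponent $r' = r \cdot N$ for an appropriate $\mathfrak{P}$-number multiplier $N$ so that $s_1 s_2 \mid n^{r'} - 1$; concretely, if $d = (s_1, s_2)$, then $s_1 s_2 = d \cdot \operatorname{lcm}(s_1,s_2)$, and one uses that $n^{r} \equiv 1 \pmod{\operatorname{lcm}(s_1,s_2)}$ together with the fact that raising to a further power multiplies the modulus to which $n^{(\cdot)} - 1$ is divisible — i.e. $n^{kr} - 1$ is divisible by $n^{r} - 1$ times a factor growing with $k$. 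The main obstacle, and the only non-routine point, is ensuring this multiplier $N$ can be chosen to be a $\mathfrak{P}$-number while still forcing the extra factor of $d$ into the divisibility; I would resolve this by taking $N$ to be a power of the product of the primes in $\mathfrak{P}$ dividing $d$ (lifting-the-exponent style), which keeps $r' = rN$ a $\mathfrak{P}$-number and yields $s_1 s_2 \mid n^{r'} - 1$, hence $(r', s_1 s_2) \in \Omega(n,\mathfrak{P})$ and $s_1 s_2 \in \Xi(n,\mathfrak{P})$.

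Putting the two parts together establishes the proposition. The divisor closure is immediate; the multiplication closure hinges on the order-of-$n$ bookkeeping in the previous paragraph, and that is where I would spend the care.
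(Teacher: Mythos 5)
Your plan is correct and, at its core, the same as the paper's: both halves rest on the same ideas, the divisor closure being immediate from reducing the witnessing congruence modulo $s'$, and the multiplication closure resting on an exponent-lifting lemma. The bookkeeping differs slightly: the paper writes $s_{1}=d\ell$ with $d$ the largest divisor of $s_{1}$ coprime to $s_{2}$, notes $\ell\mid s_{2}^{k}$ for some $k$, proves by induction that $n^{rs^{k-1}}\equiv 1\pmod{s^{k}}$, and then glues the moduli $d$ and $s_{2}^{k+1}$ by coprimality, ending with the exponent $r_{1}r_{2}s_{2}^{k}$; you instead work with $\operatorname{lcm}(s_{1},s_{2})$ and $\gcd(s_{1},s_{2})$. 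Either decomposition works. The one place you must be careful is your gloss that ``$n^{kr}-1$ is divisible by $n^{r}-1$ times a factor growing with $k$'': that is false as stated, since the cofactor $1+n^{r}+\cdots+n^{r(k-1)}$ is congruent to $k$ modulo $n^{r}-1$, so it contributes only common divisors of $k$ with the modulus, not anything that grows with $k$. Your actual fix --- take $N$ to be a high enough power of the product of the primes dividing $d$, so that $d\mid N$ and $rN$ stays a $\mathfrak{P}$-number --- is the right one, and the justification you still owe is exactly the congruence $1+\alpha+\cdots+\alpha^{N-1}\equiv N\pmod{\alpha-1}$: with $\alpha=n^{r}\equiv 1\pmod{m}$, $m=\operatorname{lcm}(s_{1},s_{2})$, and $d$ dividing both $N$ and $m$, it yields $dm\mid n^{rN}-1$, i.e.\ $s_{1}s_{2}\mid n^{rN}-1$. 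That congruence is precisely the content of the paper's inductive lemma, so once you write it out your argument closes.
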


\begin{proof}
Let $s \in \Xi(n,\mathfrak{P})$. By~the~definition of~$\Xi(n,\mathfrak{P})$, $s$~is~a~positive $\mathfrak{P}$\nobreakdash-num\-ber and~there exists a~positive $\mathfrak{P}$\nobreakdash-num\-ber~$r$ such that $n^{r} \equiv 1 \pmod s$. Obviously, if~$s^{\prime} > 0$ and~$s^{\prime} \mid\nolinebreak s$, then $s^{\prime}$ is~a~$\mathfrak{P}$\nobreakdash-num\-ber and~$n^{r} \equiv 1 \pmod {s^{\prime}}$. Therefore, the~set~$\Xi(n,\mathfrak{P})$ is~closed under~taking positive divisors. Let~us show by~induction on~$k$ that $n^{rs^{k-1}} \equiv 1 \pmod {s^{k}}$ for~any $k \geqslant 1$. Indeed, the~basis of~induction holds. If~$\alpha_{k}$ denotes the~number~$n^{rs^{k-1}}$ and~$\alpha_{k} \equiv 1 \pmod {s^{k}}$ for~some $k \geqslant 1$, then
\[
1 + \alpha_{k}^{\vphantom{1}} + \alpha_{k}^{2} + \ldots + \alpha_{k}^{s-1} \equiv s \pmod {s^{k}}.
\]
It~follows that $s \mid 1 + \alpha_{k}^{\vphantom{1}} + \alpha_{k}^{2} + \ldots + \alpha_{k}^{s-1}$ and,~therefore,
\[
s^{k+1} \mid (\alpha_{k}^{\vphantom{1}} - 1)(1 + \alpha_{k}^{\vphantom{1}} + \alpha_{k}^{2} + \ldots + \alpha_{k}^{s-1}) = n^{rs^{k}} - 1.
\]

Suppose now that $r_{1}$, $r_{2}$, $s_{1}$, and~$s_{2}$ are~positive $\mathfrak{P}$\nobreakdash-num\-bers such that $n^{r_{1}} \equiv 1 \pmod {s_{1}}$ and~$n^{r_{2}} \equiv 1 \pmod {s_{2}}$. Let $d$ denote the~greatest divisor of~$s_{1}$ that is~coprime with~$s_{2}$, and~let $\ell = s_{1} / d$. Then all prime divisors of~$\ell$ divide~$s_{2}^{\vphantom{k}}$ and,~therefore, $\ell \mid s_{2}^{k}$ for~some $k \geqslant 1$. As~proven above, $n^{r_{2}^{\vphantom{k}}s_{2}^{k}} \equiv 1 \pmod {s_{2}^{k+1}}$. The~relations $n^{r_{1}} \equiv 1 \pmod {s_{1}}$ and~$d \mid\nolinebreak s_{1}$ mean that $n^{r_{1}} \equiv 1 \pmod d$. Hence, $n^{r_{1}^{\vphantom{k}}r_{2}^{\vphantom{k}}s_{2}^{k}} \equiv 1 \pmod {s_{2}^{k+1}}$, $n^{r_{1}^{\vphantom{k}}r_{2}^{\vphantom{k}}s_{2}^{k}} \equiv 1 \pmod d$,~and $n^{r_{1}^{\vphantom{k}}r_{2}^{\vphantom{k}}s_{2}^{k}} \equiv 1 \pmod {ds_{2}^{k+1}}$ because $(d, s_{2}^{\vphantom{k}}) = 1$. Since $s_{1}^{\vphantom{k}}s_{2}^{\vphantom{k}} = d\ell s_{2}^{\vphantom{k}} \mid ds_{2}^{k+1}$, it~follows that $n^{r_{1}^{\vphantom{k}}r_{2}^{\vphantom{k}}s_{2}^{k}} \equiv 1 \pmod {s_{1}^{\vphantom{k}}s_{2}^{\vphantom{k}}}$. It~remains to~note that $s_{1}^{\vphantom{k}}s_{2}^{\vphantom{k}}$ and~$r_{1}^{\vphantom{k}}r_{2}^{\vphantom{k}}s_{2}^{k}$ are, obviously, positive $\mathfrak{P}$\nobreakdash-num\-bers and,~therefore, $s_{1}^{\vphantom{k}}s_{2}^{\vphantom{k}} \in \Xi(n,\mathfrak{P})$.
\end{proof}

\begin{eproposition}\label{ep03}
The~following statements hold.

\textup{1.\hspace{1ex}}If~$n \in \mathbb{Z} \setminus \{0\}$ and~$(r,s) \in \Omega(n)$\textup{,} then $H(n,r,s)$ is~a~finite solvable group of~order~$rs$.

\textup{2.\hspace{1ex}}Suppose that $\mathcal{C}$ is~a~root class of~groups consisting only of~periodic groups and~$n \in \mathbb{Z} \setminus \{0\}$. Then every homomorphism of~$\mathrm{BS}(1,n)$ onto~a~group from~$\mathcal{C}$ continues the~natural homomorphism $\eta\colon \mathrm{BS}(1,n) \to H(n,r,s)$ for~some $(r,s) \in \Omega(n,\mathfrak{P}(\mathcal{C}))$. In~particular\textup{,} if~the~group $\mathrm{BS}(1,n)$ is~conjugacy $\mathcal{C}$\nobreakdash-sepa\-ra\-ble\textup{,} then it~is~conjugacy $\mathcal{FS}_{\mathfrak{P}(\mathcal{C})}$\nobreakdash-sepa\-ra\-ble.
\end{eproposition}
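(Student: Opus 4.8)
The plan is to prove Statement~1 by recognizing $H(n,r,s)$ as an explicit semidirect product, and Statement~2 (including the ``in particular'') by a short factorization argument.

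\emph{Statement 1.} The hypothesis $(r,s)\in\Omega(n)$, i.e.\ $n^{r}\equiv 1\pmod{s}$, yields two facts: first, $n$ is coprime to $s$ (since $n\cdot n^{r-1}\equiv 1\pmod{s}$), so multiplication by $n$ is an automorphism of the cyclic group $\mathbb{Z}_{s}$; second, the order of that automorphism in $\operatorname{Aut}(\mathbb{Z}_{s})$ divides $r$. Hence there is a well-defined action of $\mathbb{Z}_{r}$ on $\mathbb{Z}_{s}$ in which a generator acts by this automorphism, and the resulting semidirect product $\mathbb{Z}_{s}\rtimes\mathbb{Z}_{r}$ admits the presentation $\langle t,a;\ a^{s}=1,\ t^{r}=1,\ t^{-1}at=a^{n}\rangle$, which is exactly the defining presentation of $H(n,r,s)$. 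Therefore $H(n,r,s)\cong\mathbb{Z}_{s}\rtimes\mathbb{Z}_{r}$ is finite of order $rs$ and metabelian, hence solvable. (If one prefers not to invoke the presentation of a semidirect product, the same conclusion follows by noting that $\langle a\rangle$ is a normal cyclic subgroup of $H(n,r,s)$ of order dividing $s$ with cyclic quotient of order dividing $r$, so $|H(n,r,s)|\leqslant rs$, and the evident surjection onto $\mathbb{Z}_{s}\rtimes\mathbb{Z}_{r}$ then forces equality and solvability.)

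\emph{Statement 2.} Let $\phi$ be a homomorphism of $\mathrm{BS}(1,n)$ onto a group $G\in\mathcal{C}$. Since $G$ is periodic, $t\phi$ and $a\phi$ have finite orders $r$ and $s$ respectively, and $r$, $s$ are $\mathfrak{P}(\mathcal{C})$-numbers because every prime dividing the order of an element of a $\mathcal{C}$-group belongs to $\mathfrak{P}(\mathcal{C})$. Conjugating $a$ by $t^{r}$ in $\mathrm{BS}(1,n)$ and applying $\phi$ (using $(t\phi)^{r}=1$) gives $(a\phi)^{n^{r}}=a\phi$, i.e.\ $s\mid n^{r}-1$; thus $(r,s)\in\Omega(n,\mathfrak{P}(\mathcal{C}))$. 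Next, $\ker\eta$ is the normal closure in $\mathrm{BS}(1,n)$ of $\{t^{r},a^{s}\}$ --- precisely the relators adjoined in passing from $\mathrm{BS}(1,n)$ to $H(n,r,s)$ --- and $t^{r}$, $a^{s}$ both lie in the normal subgroup $\ker\phi$; hence $\ker\eta\leqslant\ker\phi$, so $\phi$ factors as $\phi=\bar\phi\eta$ for a homomorphism $\bar\phi\colon H(n,r,s)\to G$, which is what it means for $\phi$ to continue $\eta$.

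For the final assertion, suppose $\mathrm{BS}(1,n)$ is conjugacy $\mathcal{C}$-separable and let $x,y\in\mathrm{BS}(1,n)$ be non-conjugate. Pick a homomorphism $\phi$ of $\mathrm{BS}(1,n)$ onto some $G\in\mathcal{C}$ with $x\phi\not\sim_{G}y\phi$, and factor $\phi=\bar\phi\eta$ as above with $(r,s)\in\Omega(n,\mathfrak{P}(\mathcal{C}))$. Then $x\eta\not\sim_{H(n,r,s)}y\eta$, since otherwise applying $\bar\phi$ would make $x\phi$ and $y\phi$ conjugate in $G$. By Statement~1, $H(n,r,s)$ is finite solvable of order $rs$, and $rs$ is a $\mathfrak{P}(\mathcal{C})$-number, so $H(n,r,s)\in\mathcal{FS}_{\mathfrak{P}(\mathcal{C})}$; as $\eta$ is onto, it distinguishes the conjugacy classes of $x$ and $y$ in a $\mathcal{FS}_{\mathfrak{P}(\mathcal{C})}$-group. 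Since $x$, $y$ were arbitrary, $\mathrm{BS}(1,n)$ is conjugacy $\mathcal{FS}_{\mathfrak{P}(\mathcal{C})}$-separable. I anticipate no genuine obstacle: the only points needing slight care are the identification of $\ker\eta$ with the normal closure of $\{t^{r},a^{s}\}$ (immediate from comparing presentations) and the non-collapse of the semidirect-product presentation (standard); the rest is routine bookkeeping with orders and normal closures.
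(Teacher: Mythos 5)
Your proof is correct and follows essentially the same route as the paper: Statement 1 via identifying $H(n,r,s)$ with the split extension $\mathbb{Z}_{s}\rtimes\mathbb{Z}_{r}$ (your parenthetical surjection argument is literally the paper's), and Statement 2 via reading off the orders $r,s$ of $t\phi,a\phi$, deducing $n^{r}\equiv 1\pmod{s}$, and factoring $\phi$ through $\eta$ (you phrase this with $\ker\eta\leqslant\ker\phi$, the paper with von Dyck's theorem on generators --- the same argument). You also correctly spell out the ``in particular'' clause that the paper leaves implicit.
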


\begin{proof}
1.\hspace{1ex}It~follows from~the~equalities $t^{-1}at = a^{n}$ and~$tat^{-1} = t^{-(r-1)}at^{r-1} = a^{n^{r-1}}$ that the~subgroup~$\langle a \rangle$ is~normal in~the~group $H(n,r,s)$ and,~therefore, the~latter is~a~split extension of~this subgroup by~the~group $\langle t;\ t^{r} = 1 \rangle$. Let
\[
\mathfrak{A} = \langle \mathfrak{a};\ \mathfrak{a}^{s} = 1 \rangle
\quad\text{and}\quad 
\mathfrak{T} = \langle \mathfrak{t};\ \mathfrak{t}^{r} = 1 \rangle.
\]
The~inclusion $(r,s) \in \Omega(n)$ means that $n^{r} \equiv 1 \pmod s$ and,~in~particular, $(n,s) = 1$. Therefore, the~endomorphism of~$\mathfrak{A}$ taking~$\mathfrak{a}$ to~$\mathfrak{a}^{n}$ is~an~automorphism and~its order divides~$r$. This allows~us to~consider the~split extension~$\mathfrak{S}$ of~$\mathfrak{A}$ by~$\mathfrak{T}$ such that $\mathfrak{t}^{-1}\mathfrak{a}\mathfrak{t} = \mathfrak{a}^{n}$. Let $\xi$ be~the~mapping of~words in~the~alphabet $\{t, t^{-1}, a, a^{-1}\}$ that acts on~the~symbols of~this alphabet in~accordance with~the~rule
\[
t \mapsto \mathfrak{t},\quad 
t^{-1} \mapsto \mathfrak{t}^{-1},\quad 
a \mapsto \mathfrak{a},\quad 
a^{-1} \mapsto \mathfrak{a}^{-1}.
\]
It~is~obvious that $\xi$ takes all defining relations of~$H(n,r,s)$ into~the~equalities valid in~$\mathfrak{S}$ and,~therefore, defines a~homomorphism~$\varphi$ of~the~first group into~the~second one. Since $\langle a \rangle\varphi = \mathfrak{A}$, the~order of~the~subgroup $\langle a \rangle$ equals~$s$. Hence, the~order of~the~group $H(n,r,s)$ is~equal~to~$rs$.

\smallskip

2.\hspace{1ex}Let $\sigma$ be~a~homomorphism of~$\mathrm{BS}(1,n)$ onto~a~$\mathcal{C}$\nobreakdash-group~$X$. Since $\mathcal{C}$ consists of~periodic groups, the~orders $r$ and~$s$ of~the~elements $t\sigma$ and~$a\sigma$ are~finite and~are~$\mathfrak{P}(\mathcal{C})$\nobreakdash-num\-bers. The~equalities $(t\sigma)^{-1}a\sigma t\sigma = (a\sigma)^{n}$ and~$(t\sigma)^{r} = 1$ imply that $a\sigma =\nolinebreak (t\sigma)^{-r}a\sigma (t\sigma)^{r} =\nolinebreak (a\sigma)^{n^{r}}$. Since the~order of~$a\sigma$ is~equal to~$s$, it~follows that $n^{r}\kern-1.5pt{} \equiv\kern-1pt{} 1\kern-2.5pt{} \pmod {\mbox{}\kern-1pt{}s}$. Thus, $(r,s) \in\nolinebreak \Omega(n,\mathfrak{P}(\mathcal{C}))$ and~we~can consider the~group $H(n,r,s)$. As~above, if~$\xi$ is~the~mapping of~words in~the~alphabet $\{t^{\pm 1}, a^{\pm 1}\}$ that acts on~the~symbols of~this alphabet by~the~rule $t^{\pm 1} \mapsto (t\sigma)^{\pm 1}$, $a^{\pm 1} \mapsto (a\sigma)^{\pm 1}$, then $\xi$ takes all defining relations of~$H(n,r,s)$ into~the~equalities valid in~$X$ and,~therefore, defines a~homomorphism $\rho\colon H(n,r,s) \to X$. By~the~construction, the~homomorphisms $\sigma$ and~$\eta\rho$ act identically on~the~generators of~$\mathrm{BS}(1,n)$. Hence, $\sigma = \eta\rho$ and~$H(n,r,s)$ is~the~desired group.
\end{proof}

\begin{eproposition}\label{ep04}
\textup{\cite[Proposition~8]{Tumanova2019SMJ}}
Let $\mathcal{C}$ be~a~root class of~groups consisting only of~periodic groups. A~finite solvable group belongs to~$\mathcal{C}$ if and~only~if it~is~a~$\mathfrak{P}(\mathcal{C})$\nobreakdash-group.
\end{eproposition}

\begin{eproposition}\label{ep05}
If $m,n \in \mathbb{Z}$ and~$0 < m \leqslant |n|$\textup{,} then the~following statements hold.

\textup{1.\hspace{1ex}}The~group $\mathrm{BS}(m,n)$ is~residually finite if and~only~if $m = 1$ or~$m = |n|$ \textup{\cite[Theorem~C]{Meskin1972TAMS}}.

\textup{2.\hspace{1ex}}If~the~group $\mathrm{BS}(m,n)$ is~residually finite\textup{,} then it~is~conjugacy separable \textup{\cite[Theorem~10]{Moldavanskii2018CA}}.

\textup{3.\hspace{1ex}}Given a~root class of~groups~$\mathcal{C}$\kern-2pt{} consisting only of~periodic groups\textup{,}\kern-1pt{} the~group $\mathrm{BS}(m,-m)$ is~residually a~$\mathcal{C}$\nobreakdash-group if and~only~if $m$~is~a~$\mathfrak{P}(\mathcal{C})$\nobreakdash-num\-ber and~$2 \in \mathfrak{P}(\mathcal{C})$~\textup{\cite{Tumanova2017SMJ}}.
\end{eproposition}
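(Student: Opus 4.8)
Statements~1 and~2 need no argument: they are \cite[Theorem~C]{Meskin1972TAMS} and \cite[Theorem~10]{Moldavanskii2018CA} quoted verbatim. Statement~3 is likewise part of the residuality criterion for GBS-groups established in~\cite{Tumanova2017SMJ}; since it carries the substantive content, let me indicate how I would prove it directly, organising the argument around the homomorphisms of~$\mathrm{BS}(m,-m)$ onto a~$\mathcal{C}$\nobreakdash-group. I use that $\mathrm{BS}(m,-m)$ is the HNN\nobreakdash-extension of~$\langle a \rangle$ with stable letter~$t$ and both associated subgroups equal to~$\langle a^{m} \rangle$, the associating isomorphism being inversion; consequently the kernel~$N$ of the homomorphism $\varepsilon\colon \mathrm{BS}(m,-m) \to \langle t \rangle \cong \mathbb{Z}$, $a \mapsto 1$, $t \mapsto t$, is the infinite amalgamated product of the subgroups $\langle t^{-i}at^{i} \rangle$, $i \in \mathbb{Z}$, along their index\nobreakdash-$m$ subgroups, with $(t^{-i}at^{i})^{m} = (t^{-i-1}at^{i+1})^{-m}$.

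The plan for the ``only if'' part is to exhibit, in each forbidden case, a single nontrivial element that dies under every homomorphism onto a~$\mathcal{C}$\nobreakdash-group. That $2 \in \mathfrak{P}(\mathcal{C})$ is necessary: as $a$ has infinite order, residual $\mathcal{C}$\nobreakdash-ness gives a homomorphism~$\sigma$ onto a~$\mathcal{C}$\nobreakdash-group with $(a\sigma)^{2m} \ne 1$, so $d := |a\sigma|$ does not divide~$2m$, whence $d/(d,m) \geqslant 3$. The defining relation shows that $t\sigma$ normalises~$\langle (a\sigma)^{m} \rangle$ and acts on this cyclic group of order $d/(d,m) \geqslant 3$ by inversion, an automorphism of order exactly~$2$; hence $2 \mid |t\sigma|$, and $|t\sigma|$ is a~$\mathfrak{P}(\mathcal{C})$\nobreakdash-num\-ber since $\mathcal{C}$ consists of periodic groups. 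That $m$ must be a~$\mathfrak{P}(\mathcal{C})$\nobreakdash-num\-ber: if a prime~$p$ divides~$m$ with $p \notin \mathfrak{P}(\mathcal{C})$, consider $w = a^{m/p} \cdot t^{-1}a^{m/p}t$. Because $0 < m/p < m$, $w$ is a reduced word of length two in the amalgam $\langle a, t^{-1}at \rangle = \langle a \rangle *_{\langle a^{m}\rangle = \langle (t^{-1}at)^{m}\rangle} \langle t^{-1}at \rangle$, so $w \ne 1$; yet for every homomorphism~$\sigma$ onto a~$\mathcal{C}$\nobreakdash-group the order $d = |a\sigma|$ is prime to~$p$, hence $(m,d) \mid m/p$ and so $(a\sigma)^{m/p} \in \langle (a\sigma)^{m} \rangle$, on which $t\sigma$ acts by inversion; therefore
\[
w\sigma = (a\sigma)^{m/p} \cdot (t\sigma)^{-1}(a\sigma)^{m/p}(t\sigma) = (a\sigma)^{m/p}(a\sigma)^{-m/p} = 1 .
\]
Thus $w$ lies in the kernel of every $\mathcal{C}$\nobreakdash-homomorphism, contradicting residual $\mathcal{C}$\nobreakdash-ness.

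For the ``if'' part, assume $2 \in \mathfrak{P}(\mathcal{C})$ and that $m$ is a~$\mathfrak{P}(\mathcal{C})$\nobreakdash-num\-ber; the plan is to produce enough homomorphisms onto finite solvable $\mathfrak{P}(\mathcal{C})$\nobreakdash-groups --- which lie in~$\mathcal{C}$ by Proposition~\ref{ep04} --- to separate every nontrivial element from~$1$. An element outside~$N = \ker\varepsilon$ is caught by composing~$\varepsilon$ with a projection $\mathbb{Z} \to \mathbb{Z}_{e}$ for a suitable $\mathfrak{P}(\mathcal{C})$\nobreakdash-num\-ber~$e$. For an element of~$N$ I would use the analogues of the groups $H(n,r,s)$: split extensions $C_{d} \rtimes C_{e}$ in which $a$ maps onto a generator of the cyclic group~$C_{d}$, with $d$ a~$\mathfrak{P}(\mathcal{C})$\nobreakdash-num\-ber divisible by~$m$, and $t$ acts through multiplication by an integer~$\lambda$ satisfying $\lambda \equiv -1 \pmod{d/m}$ --- which forces the relation $t^{-1}a^{m}t = a^{-m}$ --- chosen so that the images of the amalgam generators $a,\ t^{-1}at,\ t^{-2}at^{2},\ \dots$ are pairwise far enough apart. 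I expect the main obstacle to be precisely this last point: checking that, as $d$ and~$\lambda$ run over the admissible values, each reduced word of the amalgam~$N$ survives in one of these quotients. This is the step that genuinely needs $m \mid d$, hence that $m$ is a~$\mathfrak{P}(\mathcal{C})$\nobreakdash-num\-ber, and it is where the combinatorial bookkeeping is heaviest; the rest reduces to the elementary number theory already used for $H(n,r,s)$ in Propositions~\ref{ep01}--\ref{ep03}.
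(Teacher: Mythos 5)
The paper gives no proof of this proposition at all: each of the three statements is justified purely by the citation attached to it, so resting the argument on \cite{Meskin1972TAMS}, \cite{Moldavanskii2018CA} and \cite{Tumanova2017SMJ} is exactly what the paper does, and on that basis your proposal is correct. Your supplementary direct argument for the ``only if'' half of Statement~3 is sound and complete: forcing $(a\sigma)^{2m}\ne 1$ does give $|\langle (a\sigma)^{m}\rangle|\geqslant 3$, so $t\sigma$ induces an automorphism of order exactly $2$ on that subgroup and $2\in\mathfrak{P}(\mathcal{C})$ follows; and $w=a^{m/p}\,t^{-1}a^{m/p}t$ is indeed a reduced word of length two in the amalgam $\langle a\rangle *_{\langle a^{m}\rangle=\langle (t^{-1}at)^{m}\rangle}\langle t^{-1}at\rangle$, hence nontrivial, while $(m,|a\sigma|)\mid m/p$ forces $(a\sigma)^{m/p}$ into $\langle(a\sigma)^{m}\rangle$, where $t\sigma$ acts by inversion, so $w\sigma=1$ in every $\mathcal{C}$\nobreakdash-quotient. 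The ``if'' half, by contrast, is only a plan: you acknowledge that the decisive step --- verifying that every nontrivial element of the kernel $N=\ker\varepsilon$ survives in one of the finite quotients $C_{d}\rtimes C_{e}$ as $d$ and $\lambda$ vary --- is the heaviest part, and you do not carry it out. Consequently the direct argument could not replace the citation to \cite{Tumanova2017SMJ}; since you retain that citation, the proposal as a whole stands, but the sketch of sufficiency should not be mistaken for a proof.
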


\begin{eproposition}\label{ep06}
Suppose that $\mathcal{C}$ is~a~root class of~groups consisting only of~periodic groups and~$n \in \mathbb{Z} \setminus \{0\}$. Then the~following statements hold.

\textup{1.\hspace{1ex}}The~group $\mathrm{BS}(1,n)$ is~conjugacy $\mathcal{C}$\nobreakdash-sepa\-ra\-ble if and~only~if
\begin{equation}\label{ef01}
\forall u \in \mathbb{Z}_{\geqslant 0}\ 
\forall v,w \in \mathbb{Z}\ \Bigg[
\begin{aligned}
&\big(\forall x,y \in \mathbb{Z}_{\geqslant 0}\ 
[n^{u}-1 \nmid vn^{x} - wn^{y}]\big) \Rightarrow \\
&\big(\exists s \in \Xi(n,\mathfrak{P}(\mathcal{C}))\ 
\forall x,y \in \mathbb{Z}_{\geqslant 0}\ 
[(n^{u}-1,\;s) \nmid vn^{x} - wn^{y}]\big)
\end{aligned}
\Bigg].
\end{equation}

\textup{2.\hspace{1ex}}If~the~group $\mathrm{BS}(1,-1)$ is~residually a~$\mathcal{C}$\nobreakdash-group\textup{,} then it~is~conjugacy $\mathcal{FS}_{\mathfrak{P}(\mathcal{C})}$\nobreakdash-sepa\-rable.
\end{eproposition}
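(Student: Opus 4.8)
The plan is to prove the two statements of Proposition~\ref{ep06} separately, with Statement~1 being a more-or-less direct translation of conjugacy $\mathcal{C}$-separability into the arithmetic language already set up in Propositions~\ref{ep01}--\ref{ep04}, and Statement~2 being the degenerate case $n=-1$ that the formula in Statement~1 does not handle cleanly (since then $\mathrm{BS}(1,-1)$ is not torsion-free and the reduction to elements of the form $t^{u}a^{v}$ still works, but the arithmetic is trivial).

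For Statement~1, first I would use Proposition~\ref{ep01}(1) to reduce the conjugacy separability question to pairs of elements of the form $t^{u}a^{v}$ with $u \in \mathbb{Z}_{\geqslant 0}$ and $v \in \mathbb{Z}$: conjugation in a homomorphic image cannot decrease $u$ (the exponent sum in $t$ is a homomorphism-invariant up to the relations that survive, but more carefully, in any $\mathcal{C}$-image the image of $t$ has finite order $r$, so one must be slightly careful and match $u$ modulo that order — I expect this to be handled by first noting that if $t^{u_1}a^{v_1}$ and $t^{u_2}a^{v_2}$ have $u_1 \ne u_2$ they can already be separated in some $\mathcal{C}$-image because $\mathrm{BS}(1,n)$ is residually a $\mathcal{C}$-group whenever it is conjugacy $\mathcal{C}$-separable, or by a direct abelianization/$\eta$-argument). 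With $u$ fixed, Proposition~\ref{ep01}(2) says $t^{u}a^{v} \sim_{\mathrm{BS}(1,n)} t^{u}a^{w}$ iff $\exists x,y \in \mathbb{Z}_{\geqslant 0}\ [\,n^{u}-1 \mid vn^{x}-wn^{y}\,]$, and $t^{u}a^{v} \sim_{H(n,r,s)} t^{u}a^{w}$ iff $\exists x,y\ [\,(n^{u}-1,s) \mid vn^{x}-wn^{y}\,]$. By Proposition~\ref{ep03}(1) and Proposition~\ref{ep04}, the $\mathcal{C}$-images that matter are exactly the quotients $H(n,r,s)$ with $(r,s) \in \Omega(n,\mathfrak{P}(\mathcal{C}))$ (via Proposition~\ref{ep03}(2), which says every $\mathcal{C}$-image factors through such an $H(n,r,s)$, and conversely each such $H(n,r,s)$ is a finite solvable $\mathfrak{P}(\mathcal{C})$-group, hence lies in $\mathcal{C}$). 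So $\mathrm{BS}(1,n)$ is conjugacy $\mathcal{C}$-separable iff for every non-conjugate pair $t^{u}a^{v}$, $t^{u}a^{w}$ there is some $(r,s) \in \Omega(n,\mathfrak{P}(\mathcal{C}))$ keeping them non-conjugate in $H(n,r,s)$. Since the second condition in Proposition~\ref{ep01}(2) depends on $s$ only through $(n^{u}-1,s)$, and since by definition $s$ ranges over $\Xi(n,\mathfrak{P}(\mathcal{C}))$ precisely when $(r,s) \in \Omega(n,\mathfrak{P}(\mathcal{C}))$ for some $r$, the criterion collapses to~\eqref{ef01}. The one genuinely delicate point is that one needs $s$ to be free to range over all of $\Xi(n,\mathfrak{P}(\mathcal{C}))$ independently of $u$; Proposition~\ref{ep02} (closure of $\Xi$ under multiplication and positive divisors) is what guarantees that for any finite collection of requirements one can find a single common $s$, and in the conjugacy-separability criterion only one pair, hence one $s$, is needed at a time, so this is automatic.

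For Statement~2, with $n=-1$ the group $\mathrm{BS}(1,-1) = \langle a,t;\ t^{-1}at = a^{-1}\rangle$ is the Klein bottle group, which is residually a $\mathcal{C}$-group iff (by Proposition~\ref{ep05}(3) with $m=1$, or by the remark after Theorem~\ref{et02}) $2 \in \mathfrak{P}(\mathcal{C})$. I would verify~\eqref{ef01} directly in this case: here $n^{u}-1$ is $0$ when $u$ is even and $-2$ when $u$ is odd, and $n^{x} = \pm 1$, so the left-hand divisibility condition and the right-hand one are about divisibility of $\pm v \mp w$ by $0$, by $2$, or by $(2,s)$. Since $2 \in \mathfrak{P}(\mathcal{C})$, the set $\Xi(-1,\mathfrak{P}(\mathcal{C}))$ contains all powers of $2$ (as $(-1)^{2} \equiv 1 \pmod{2^{k}}$ gives $(2,2^{k}) \in \Omega(-1,\mathfrak{P}(\mathcal{C}))$), so we may take $s$ to be a large power of $2$, making $(n^{u}-1,s) = n^{u}-1$ whenever $u$ is odd, and handling the $u$ even case (where $n^{u}-1 = 0$ forces $vn^x = wn^y$, i.e. $v = \pm w$) by choosing $s$ not dividing $v-w$ or $v+w$ among the powers of $2$ — which is possible unless $v = w$, in which case the elements are equal and there is nothing to prove, or $v=-w\ne 0$... here one checks $t^u a^v \sim t^u a^{-v}$ already holds in $\mathrm{BS}(1,-1)$ for $u$ even via $x=0,y=1$. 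Thus~\eqref{ef01} holds and, by Statement~1 together with Proposition~\ref{ep03}(2) and Proposition~\ref{ep04}, $\mathrm{BS}(1,-1)$ is conjugacy $\mathcal{FS}_{\mathfrak{P}(\mathcal{C})}$-separable.

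The main obstacle I anticipate is the bookkeeping in the reduction of Statement~1 to elements $t^{u}a^{v}$ with $u$ normalized, specifically making sure that when we pass to an image $H(n,r,s)$ the exponent $u$ is not collapsed modulo $r$ in a way that creates spurious conjugacies — this requires observing that $u$ can be read off from the torsion-free quotient $\mathbb{Z}$ (the $t$-abelianization) and that any two elements differing in their $u$-invariant are separated already at the level of residual $\mathcal{C}$-ness, which holds whenever the hypothesis of conjugacy $\mathcal{C}$-separability (or, for Statement~2, residual $\mathcal{C}$-ness) is in force. Everything else is the routine arithmetic manipulation of the divisibility conditions supplied by Proposition~\ref{ep01}(2) and the closure properties in Proposition~\ref{ep02}.
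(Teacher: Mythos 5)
Your proposal is correct and follows essentially the same route as the paper: reduction to pairs of the form $t^{u}a^{v}$, $t^{u'}a^{w}$ via Proposition~\ref{ep01}, identification of the relevant $\mathcal{C}$\nobreakdash-images with the groups $H(n,r,s)$, $(r,s)\in\Omega(n,\mathfrak{P}(\mathcal{C}))$, via Propositions~\ref{ep03} and~\ref{ep04}, and translation into~\eqref{ef01}; for Statement~2 you verify~\eqref{ef01} arithmetically, whereas the paper computes the conjugacy classes of $\mathrm{BS}(1,-1)$ and their images in $H(-1,2,s)$ directly, which amounts to the same thing. The one point to repair is your treatment of pairs with $u\ne u'$: in the direction ``\eqref{ef01} implies conjugacy $\mathcal{C}$\nobreakdash-separability'' neither residual $\mathcal{C}$\nobreakdash-ness nor conjugacy $\mathcal{C}$\nobreakdash-separability is available as a hypothesis (and residual $\mathcal{C}$\nobreakdash-ness would in any case only separate unequal elements, not non-conjugate ones), so of your two suggested justifications only the ``abelianization'' one is usable; the paper implements it unconditionally by mapping onto the finite cyclic group $H(n,r,1)$ for a $\mathfrak{P}(\mathcal{C})$\nobreakdash-number $r>|u|+|u'|$, which belongs to $\mathcal{C}$ by Proposition~\ref{ep04}.
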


\begin{proof}
1.\hspace{1ex}Let
\[
\mathcal{H} = \big\{H(n,r,s) \mid 
(r,s) \in \Omega(n,\mathfrak{P}(\mathcal{C}))\big\}.
\]
It~follows from~Propositions~\ref{ep03} and~\ref{ep04} that $\mathcal{H} \subseteq \mathcal{FS}_{\mathfrak{P}(\mathcal{C})} \subseteq \mathcal{C}$ and~the~group $\mathrm{BS}(1,n)$ is~conjugacy $\mathcal{C}$\nobreakdash-sepa\-ra\-ble if and~only~if it~is~conjugacy $\mathcal{H}$\nobreakdash-sepa\-ra\-ble. By~Proposition~\ref{ep01}, any element of~$\mathrm{BS}(1,n)$ is~conjugate to~an~element of~the~form~$t^{u}a^{v}$, where $u,v \in \mathbb{Z}$. Suppose that $g = t^{u}a^{v}$, $g^{\prime} = t^{u^{\prime}}a^{w}$, $u \ne u^{\prime}$, and~$r$ is~a~$\mathfrak{P}(\mathcal{C})$\nobreakdash-num\-ber such that $r > |u| + |u^{\prime}|$. Then $(r,1) \in \Omega(n,\mathfrak{P}(\mathcal{C}))$ and~the~natural homomorphism of~$\mathrm{BS}(1,n)$ onto~the~finite cyclic group $H(n,r,1)$ takes $g$ and~$g^{\prime}$ to~non-equal and,~therefore, non-con\-ju\-gate elements. Thus, $\mathrm{BS}(1,n)$ is~conjugacy $\mathcal{H}$\nobreakdash-sepa\-ra\-ble if and~only~if
\begin{equation}\label{ef02}
\forall u,v,w \in \mathbb{Z}\ 
\Big[\big(t^{u}a^{v} \nsim_{BS(1,n)} t^{u}a^{w}\big) \Rightarrow 
\big(\exists (r,s) \in \Omega(n,\mathfrak{P}(\mathcal{C}))\ 
[t^{u}a^{v} \nsim_{H(n,r,s)} t^{u}a^{w}]\big)\Big].
\end{equation}

Let $X$ denote any of~the~groups $\mathrm{BS}(1,n)$ and~$H(n,r,s)$. Then, if $u < 0$, the~equalities
\[
(t^{u}a^{v})^{-1} = t^{-u}t^{u}a^{-v}t^{-u} = t^{-u}a^{-vn^{-u}}
\]
hold in~$X$, and~$t^{u}a^{v} \sim_{X} t^{u}a^{w}$ if and~only~if $(t^{u}a^{v})^{-1} \sim_{X} (t^{u}a^{w})^{-1}$. Therefore, the~number~$u$ in~\eqref{ef02} can be~considered non-nega\-tive. Since
\[
\big(\exists (r,s) \in \Omega(n,\mathfrak{P}(\mathcal{C}))\big) 
\Leftrightarrow 
\big(\exists s \in \Xi(n,\mathfrak{P}(\mathcal{C}))\big),
\]
it~follows that~\eqref{ef01} and~\eqref{ef02} are~equivalent due~to~Statement~2 of~Proposition~\ref{ep01}.

\smallskip

2.\hspace{1ex}Let~us show that if $n = -1$ and~$\mathrm{BS}(1,-1)$ is~residually a~$\mathcal{C}$\nobreakdash-group, then~\eqref{ef02} holds and,~therefore, the~group $\mathrm{BS}(1,-1)$ is~conjugacy $\mathcal{FS}_{\mathfrak{P}(\mathcal{C})}$\nobreakdash-sepa\-ra\-ble due~to~the~above arguments.

Suppose that $u,v,w \in \mathbb{Z}$, $t^{u}a^{v} \nsim_{BS(1,-1)} t^{u}a^{w}$, and~$s$ is~a~$\{2\}$\nobreakdash-num\-ber which is~greater than $|v|+|w|$. Since $\mathrm{BS}(1,-1)$ is~residually a~$\mathcal{C}$\nobreakdash-group, it~follows from~Statement~3 of~Proposition~\ref{ep05} that $2 \in \mathfrak{P}(\mathcal{C})$ and,~hence, $(2,s) \in \Omega(-1,\mathfrak{P}(\mathcal{C}))$. Since any element of~the~splitting extension $\mathrm{BS}(1,-1)$ can be~written in~the~form~$t^{x}a^{y}$ for~some $x,y \in \mathbb{Z}$~and
\[
(t^{x}a^{y})^{-1}t^{u}a^{v}(t^{x}a^{y}) = t^{u}a^{(-1)^{|x|}v + (1-(-1)^{|u|})y},
\]
the~conjugacy class~$Y$ of~$t^{u}a^{v}$ in~$\mathrm{BS}(1,-1)$ is~either the~set $\{t^{u}a^{v}, t^{u}a^{-v}\}$ (if~$u$ is~even) or~the~set $\{t^{u}a^{v+2k} \mid k \in \mathbb{Z}\}$ (if~$u$ is~odd). Clearly, if~$\eta\colon \mathrm{BS}(1,-1) \to H(-1,2,s)$ is~the~natural homomorphism, then the~conjugacy class of~$t^{u}a^{v} = (t^{u}a^{v})\eta$ in~$H(-1,2,s)$ coincides with~$Y\eta$. Since $|w \pm v| \leqslant |v|+|w| < s$ and~$s$ is~even, it~follows from~the~relation $t^{u}a^{w} \notin Y$ that $(t^{u}a^{w})\eta \notin Y\eta$. Hence, $t^{u}a^{v} \nsim_{H(-1,2,s)} t^{u}a^{w}$ and~\eqref{ef02} holds.
\end{proof}

\begin{eproposition}\label{ep07}
Suppose that $\mathcal{C}$ is~a~root class of~groups consisting only of~periodic groups and~$\mathfrak{G}$ is~a~solvable GBS-group. Then the~following statements are~equivalent.

\textup{1.\hspace{1ex}}The~group~$\mathfrak{G}$ is~conjugacy $\mathcal{C}$\nobreakdash-sepa\-ra\-ble.

\textup{2.\hspace{1ex}}The~group~$\mathfrak{G}$ is~conjugacy $\mathcal{F}_{\mathfrak{P}(\mathcal{C})}$\nobreakdash-sepa\-ra\-ble.

\textup{3.\hspace{1ex}}The~group~$\mathfrak{G}$ is~conjugacy $\mathcal{FS}_{\mathfrak{P}(\mathcal{C})}$\nobreakdash-sepa\-ra\-ble.
\end{eproposition}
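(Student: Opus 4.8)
The plan is to split on the isomorphism type of $\mathfrak{G}$. Recall from Section~\ref{es01} that a solvable GBS-group is either infinite cyclic or isomorphic to $\mathrm{BS}(1,n)$ for some $n \in \mathbb{Z} \setminus \{0\}$: the elementary cases correspond to $\mathbb{Z}$, $\mathrm{BS}(1,1)$, and $\mathrm{BS}(1,-1)$, while the non-elementary solvable ones are the groups $\mathrm{BS}(1,n)$ with $|n| > 1$. So it suffices to treat $\mathfrak{G} \cong \mathbb{Z}$ and $\mathfrak{G} \cong \mathrm{BS}(1,n)$ separately. The guiding observation is that each of the three classes $\mathcal{C}$, $\mathcal{F}_{\mathfrak{P}(\mathcal{C})}$, $\mathcal{FS}_{\mathfrak{P}(\mathcal{C})}$ is a root class consisting only of periodic groups, and that all three share the same associated set of primes, namely $\mathfrak{P}(\mathcal{C})$; hence any criterion for conjugacy separability of these particular groups that is phrased purely in terms of $\mathfrak{P}(\cdot)$ will automatically be the same for all three classes.

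For $\mathfrak{G} \cong \mathbb{Z}$ the three statements are in fact all true, hence trivially equivalent. Indeed, since $\mathcal{C}$ is non-trivial, $\mathfrak{P}(\mathcal{C})$ contains some prime~$p$, and for every $k \geqslant 1$ the cyclic group $\mathbb{Z}/p^{k}\mathbb{Z}$ is a finite solvable $\mathfrak{P}(\mathcal{C})$\nobreakdash-group, so it lies in $\mathcal{FS}_{\mathfrak{P}(\mathcal{C})} \subseteq \mathcal{F}_{\mathfrak{P}(\mathcal{C})}$ and, by Proposition~\ref{ep04}, also in~$\mathcal{C}$. As conjugacy in an abelian group coincides with equality, and distinct integers $x \ne y$ are separated in the quotient $\mathbb{Z}/p^{k}\mathbb{Z}$ whenever $p^{k} > |x - y|$, the group~$\mathbb{Z}$ is conjugacy $\mathcal{D}$\nobreakdash-sepa\-ra\-ble for each $\mathcal{D} \in \{\mathcal{C},\ \mathcal{F}_{\mathfrak{P}(\mathcal{C})},\ \mathcal{FS}_{\mathfrak{P}(\mathcal{C})}\}$.

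For $\mathfrak{G} \cong \mathrm{BS}(1,n)$ with $n \in \mathbb{Z} \setminus \{0\}$ I would argue as follows. First one does the bookkeeping: $\mathcal{F}_{\mathfrak{P}(\mathcal{C})}$ and $\mathcal{FS}_{\mathfrak{P}(\mathcal{C})}$ are non-trivial classes of finite (hence periodic) groups that are closed under taking subgroups and extensions, so they are root classes; moreover $\mathfrak{P}(\mathcal{F}_{\mathfrak{P}(\mathcal{C})}) = \mathfrak{P}(\mathcal{FS}_{\mathfrak{P}(\mathcal{C})}) = \mathfrak{P}(\mathcal{C})$, since the prime divisors of element orders in a finite $\mathfrak{P}(\mathcal{C})$\nobreakdash-group are exactly the primes of $\mathfrak{P}(\mathcal{C})$ (the inclusion ``$\subseteq$'' holds by Lagrange, and ``$\supseteq$'' holds because $\mathbb{Z}/p\mathbb{Z}$ is a finite solvable $\mathfrak{P}(\mathcal{C})$\nobreakdash-group for each $p \in \mathfrak{P}(\mathcal{C})$). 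Now Proposition~\ref{ep06} applies verbatim to each of the three root classes and tells us that $\mathrm{BS}(1,n)$ is conjugacy $\mathcal{D}$\nobreakdash-sepa\-ra\-ble if and only if condition~\eqref{ef01} holds. But the right-hand side of~\eqref{ef01} depends on~$\mathcal{D}$ only through the set $\mathfrak{P}(\mathcal{D})$ (via $\Xi(n,\mathfrak{P}(\mathcal{D}))$), and $\mathfrak{P}(\mathcal{D}) = \mathfrak{P}(\mathcal{C})$ for all three choices of~$\mathcal{D}$; hence the condition is literally the same in the three cases, so statements~1--3 are equivalent.

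I do not expect a genuine obstacle in this proof: the real content has already been isolated in Propositions~\ref{ep04} and~\ref{ep06}, and what remains is the routine verification that the criterion extracted there is insensitive to replacing~$\mathcal{C}$ by the class of finite, or of finite solvable, $\mathfrak{P}(\mathcal{C})$\nobreakdash-groups, together with the separate (and immediate) treatment of the infinite cyclic case, which is not covered by Proposition~\ref{ep06}.
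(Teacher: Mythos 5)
Your proposal is correct, but it is organized differently from the paper's proof. The paper establishes the cycle of implications directly and never touches the arithmetic criterion~\eqref{ef01}: the equivalence $2 \Leftrightarrow 3$ follows from the observation that every homomorphic image of a solvable group is solvable (so a conjugacy $\mathcal{F}_{\mathfrak{P}(\mathcal{C})}$\nobreakdash-separating quotient automatically lies in $\mathcal{FS}_{\mathfrak{P}(\mathcal{C})}$); the implication $3 \Rightarrow 1$ is the inclusion $\mathcal{FS}_{\mathfrak{P}(\mathcal{C})} \subseteq \mathcal{C}$ from Proposition~\ref{ep04}; and $1 \Rightarrow 3$ is immediate for $\mathbb{Z}$ and is exactly Statement~2 of Proposition~\ref{ep03} (the factorization of every $\mathcal{C}$\nobreakdash-quotient of $\mathrm{BS}(1,n)$ through some $H(n,r,s)$) in the remaining case. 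You instead invoke Proposition~\ref{ep06} three times, once for each of $\mathcal{C}$, $\mathcal{F}_{\mathfrak{P}(\mathcal{C})}$, $\mathcal{FS}_{\mathfrak{P}(\mathcal{C})}$, after checking that the latter two are root classes of periodic groups with $\mathfrak{P}(\mathcal{F}_{\mathfrak{P}(\mathcal{C})}) = \mathfrak{P}(\mathcal{FS}_{\mathfrak{P}(\mathcal{C})}) = \mathfrak{P}(\mathcal{C})$, and observe that criterion~\eqref{ef01} depends on the class only through its prime set; the cyclic case is handled separately and correctly. All the verifications you need (closure under subgroups and extensions, equality of the prime sets, non-emptiness of $\mathfrak{P}(\mathcal{C})$) do go through, so the argument is sound. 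The paper's route is shorter and avoids re-deriving the equivalences from the criterion, since the relevant content is already packaged in Propositions~\ref{ep03} and~\ref{ep04}; your route is a bit heavier but makes explicit the useful general principle that, for these groups, conjugacy separability with respect to any root class of periodic groups is governed solely by the associated set of primes, so the equivalence would persist for any other such class with prime set $\mathfrak{P}(\mathcal{C})$.
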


\begin{proof}
The~equivalence of~Statements~2 and~3 follows from~the~fact that every homomorphic image of~$\mathfrak{G}$ is~a~solvable group. The~implication~$3 \Rightarrow 1$ holds due~to~Proposition~\ref{ep04}. The~implication~$1 \Rightarrow 3$ is~obvious if~$\mathfrak{G} \cong \mathbb{Z}$ and~follows from~Proposition~\ref{ep03} if~$\mathfrak{G} \cong \mathrm{BS}(1,n)$ for~some $n \in \mathbb{Z} \setminus \{0\}$.
\end{proof}

\begin{proof}[\textup{\textbf{Proof of~Theorem~\ref{et02}}}]
If~the~set $\mathfrak{P}(\mathcal{C})$ contains all primes, then the~group $\mathrm{BS}(1,n)$ is~conjugacy $\mathcal{F}_{\mathfrak{P}(\mathcal{C})}$\nobreakdash-sepa\-ra\-ble due~to~Statements~1 and~2 of~Proposition~\ref{ep05}. Therefore, the~implication~$3 \Rightarrow 2$, as~well as~the~implication~$2 \Rightarrow 1$, follows from~Proposition~\ref{ep07}. Let~us show that if $u \geqslant 2$, $v = n^{u}-1$, and~$|v| \notin \Xi(n,\mathfrak{P}(\mathcal{C}))$, then~\eqref{ef01} does~not hold and,~hence, the~group $\mathrm{BS}(1,n)$ is~not~conjugacy $\mathcal{C}$\nobreakdash-sepa\-ra\-ble due~to~Proposition~\ref{ep06}.

Indeed, since $|v| \notin \Xi(n,\mathfrak{P}(\mathcal{C}))$, $|n| \geqslant 2$, and~$|v| \geqslant |n|^{u}-1 \geqslant 3$, Proposition~\ref{ep02} implies the~existence of~a~prime divisor~$q$ of~$v$ such that $q \notin \Xi(n,\mathfrak{P}(\mathcal{C}))$. Let $w = v / q$. It~follows from~the~relations $v = n^{u}-1$, $u > 0$, and~$q \mid v$ that $q \nmid n$. Hence, $qw \nmid wn^{y}$ for~any $y \in \mathbb{Z}_{\geqslant 0}$ and,~therefore, $v = qw \nmid vn^{x} - wn^{y}$ for~all $x,y \in \mathbb{Z}_{\geqslant 0}$. At~the~same time, if~$s \in \Xi(n,\mathfrak{P}(\mathcal{C}))$, then $q \nmid s$ because the~set~$\Xi(n,\mathfrak{P}(\mathcal{C}))$ is~closed under~taking positive divisors. It~follows that $(v,s) \mid w \mid vn^{x} - wn^{y}$ for~any $x,y \in \mathbb{Z}_{\geqslant 0}$ and,~thus,~\eqref{ef01} does~not hold.

Let~us prove the~implication~$1 \Rightarrow 3$ by~contradiction and~assume that the~group $\mathrm{BS}(1,n)$ is~conjugacy $\mathcal{C}$\nobreakdash-sepa\-ra\-ble, but~the~set~$\mathfrak{P}(\mathcal{C})$ does~not~contain some prime number~$p$. If~$u = p^{2}$ and~$v = n^{p^{2}}-1 = n^{u}-1$, then $|v| \in \Xi(n,\mathfrak{P}(\mathcal{C}))$ as~proven above. It~follows from~the~definition of~the~set~$\Xi(n,\mathfrak{P}(\mathcal{C}))$ that there exists a~positive $\mathfrak{P}(\mathcal{C})$\nobreakdash-num\-ber~$r$ such that $n^{r} \equiv 1 \pmod {|v|}$. Let~us write~$r$ in~the~form $r = uk + \ell$, where $k \geqslant 0$ and~$0 \leqslant \ell < u$. Then $1 \equiv n^{r} = n^{uk+\ell} \equiv n^{\ell} \pmod {|v|}$ because $n^{u}-1 = v \equiv 0 \pmod {|v|}$. It~follows from~the~inequalities $|n| \geqslant 2$, $\ell < u$, and~$u \geqslant 4$ that
\[
|n^{\ell} - 1| \leqslant |n|^{\ell} + 1 < |n|^{u-1} + (|n|^{u-1} - 1) \leqslant |n|^{u} - 1 \leqslant |v|
\]
and,~therefore, $n^{\ell} = 1$. Hence, $\ell = 0$, $p^{2} = u \mid r$, and~since $r$ is~a~$\mathfrak{P}(\mathcal{C})$\nobreakdash-num\-ber, $p \in \mathfrak{P}(\mathcal{C})$, which contradicts the~assumption.
\end{proof}

\vspace*{0.5pt}

\section{Graphs of~groups and~their fundamental groups}\label{es03}

Suppose that $\Gamma$ is~a~non-empty connected undirected graph with~a~vertex set~$\mathcal{V}$ and an~edge set~$\mathcal{E}$ (loops and~multiple edges are~allowed). Let~us associate each vertex $v \in \mathcal{V}$ with~a~group~$G_{v}$, while each edge $e \in \mathcal{E}$ with~a~direction, a~group~$H_{e}$, and~injective homomorphisms $\varphi_{+e}\colon H_{e} \to G_{e(1)}$ and~$\varphi_{-e}\colon H_{e} \to G_{e(-1)}$, where $e(1)$ and~$e(-1)$ are~the~vertices that are~the~ends of~$e$. As~a~result, we~get a~directed \emph{graph of~groups}, which is~further denoted by~$\mathcal{G}(\Gamma)$. Let~us refer to~the~groups~$G_{v}$, the~groups~$H_{e}$, and~the~subgroups $H_{\varepsilon e} = H_{e}\varphi_{\varepsilon e}$, where $v \in \mathcal{V}$, $e \in \mathcal{E}$, and~$\varepsilon = \pm 1$, as~\emph{vertex groups}, \emph{edge groups}, and~\emph{edge subgroups} respectively.

The~\emph{fundamental group} of~the~graph~$\mathcal{G}(\Gamma)$ is~usually denoted by~the~symbol~$\pi_{1}(\mathcal{G}(\Gamma))$ and~can be~defined as~follows. Let $\mathcal{T}$ be~a~maximal subtree of~$\Gamma$, and~let $\mathcal{E}_{\mathcal{T}}$ be~the~edge set of~$\mathcal{T}$. Then the~generators of~$\pi_{1}(\mathcal{G}(\Gamma))$ are~the~generators of~the~groups~$G_{v}$, $v \in \mathcal{V}$, and~symbols~$t_{e}$, $e \in \mathcal{E} \setminus \mathcal{E}_{\mathcal{T}}$, while the~defining relations are~those of~$G_{v}$, $v \in \mathcal{V}$, and~also all possible relations of~the~forms
\begin{align*}
h_{e}^{\vphantom{1}}\varphi_{+e}^{\vphantom{1}} &= h_{e}^{\vphantom{1}}\varphi_{-e}^{\vphantom{1}} \quad 
(e \in \mathcal{E}_{\mathcal{T}}^{\vphantom{1}},\ h_{e}^{\vphantom{1}} \in H_{e}^{\vphantom{1}}), \\[-3pt]
t_{e}^{-1}(h_{e}^{\vphantom{1}}\varphi_{+e}^{\vphantom{1}})t_{e}^{\vphantom{1}} &= h_{e}^{\vphantom{1}}\varphi_{-e}^{\vphantom{1}} \quad 
(e \in \mathcal{E} \setminus \mathcal{E}_{\mathcal{T}}^{\vphantom{1}},\ 
h_{e}^{\vphantom{1}} \in H_{e}^{\vphantom{1}}), 
\end{align*}
where $h\varphi_{\varepsilon e}$, $\varepsilon = \pm 1$, is~a~word in~the~generators of~the~group~$G_{e(\varepsilon)}$ defining the~image of~$h_{e}$ under~$\varphi_{\varepsilon e}$~\cite[\S~5.1]{Serre1980}.

Obviously, the~presentation of~the~group~$\pi_{1}(\mathcal{G}(\Gamma))$ depends on~the~choice of~the~maximal subtree~$\mathcal{T}$. But~it~is~known that all such presentations define isomorphic groups~\cite[\S~5.1]{Serre1980}, and~this allows~us to~say about~the~fundamental group of~a~graph of~groups without specifying the~maximal subtree used to~construct its presentation. It~is~also known that, for~each vertex $v \in \mathcal{V}$, the~group~$G_{v}$ is~embedded into~$\pi_{1}(\mathcal{G}(\Gamma))$ via~the~identical mapping of~its generators and,~therefore, can be~considered a~subgroup of~$\pi_{1}(\mathcal{G}(\Gamma))$~\cite[\S~5.2]{Serre1980}. The~following statement is~a~special case of~Proposition~13 from~\cite{SokolovTumanova2020LJM}.

\begin{eproposition}\label{ep08}
Suppose that $\Gamma$ is~a~finite graph and~$N$ is~a~normal subgroup of~$\pi_{1}(\mathcal{G}(\Gamma))$. If~$N \cap G_{v} = 1$ for~every $v \in \mathcal{V}$\textup{,} then $N$ is~a~free group.
\end{eproposition}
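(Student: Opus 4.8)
The plan is to deduce this from the action of $\pi_{1}(\mathcal{G}(\Gamma))$ on its Bass--Serre tree. Write $\Pi = \pi_{1}(\mathcal{G}(\Gamma))$ and let $T$ be the Bass--Serre tree associated with the graph of groups $\mathcal{G}(\Gamma)$ (and with the maximal subtree $\mathcal{T}$ used to write down the presentation). We use the standard structure theory: $\Pi$ acts on $T$ without inversions of edges, the quotient graph $\Pi\backslash T$ is isomorphic to~$\Gamma$, the stabilizer in~$\Pi$ of every vertex of~$T$ is a conjugate of one of the vertex groups~$G_{v}$, and the stabilizer of every edge of~$T$ is a conjugate of one of the edge subgroups~$H_{\varepsilon e}$; in particular each edge stabilizer is contained in the stabilizer of each of the two endpoints of that edge. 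Since $\Gamma$ is finite, this is the usual construction and no extra care is needed.

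Next I would restrict this action to the subgroup~$N$ and show it is free. Let $\widetilde{v}$ be any vertex of~$T$; its $\Pi$\nobreakdash-stabilizer equals $gG_{v}g^{-1}$ for some $g \in \Pi$ and some $v \in \mathcal{V}$. Because $N$ is normal in~$\Pi$,
\[
N \cap gG_{v}g^{-1} = g\,(g^{-1}Ng \cap G_{v})\,g^{-1} = g\,(N \cap G_{v})\,g^{-1} = 1,
\]
where the last equality is the hypothesis. Hence no nontrivial element of~$N$ fixes a vertex of~$T$; since every edge stabilizer is contained in an endpoint stabilizer, no nontrivial element of~$N$ fixes an edge either. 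Thus $N$ acts on the tree~$T$ freely and without inversions.

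It then remains to invoke the basic theorem of Bass--Serre theory that a group acting freely and without inversions on a tree is free (the generalization of the Nielsen--Schreier theorem; see~\cite[\S~I.3]{Serre1980}). Applying it to~$N$ acting on~$T$ gives that $N$ is a free group, which is the assertion.

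The one place where the hypotheses really bite is the passage from ``$N$ intersects each vertex group~$G_{v}$ trivially'' to ``$N$ intersects every vertex stabilizer of~$T$ trivially'': this is precisely where normality of~$N$ is used, through the displayed conjugation identity $N \cap gG_{v}g^{-1} = g(N \cap G_{v})g^{-1}$. Everything else is a routine appeal to the correspondence between graphs of groups and group actions on trees, so I do not anticipate any further obstacle.
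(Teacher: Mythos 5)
Your proof is correct. The paper itself gives no argument for this proposition---it is quoted as a special case of Proposition~13 of the cited paper of Sokolov and Tumanova---so there is nothing to compare against beyond noting that your Bass--Serre argument (normality of $N$ reduces triviality of $N\cap gG_vg^{-1}$ to the hypothesis $N\cap G_v=1$, hence $N$ acts freely and without inversions on the Bass--Serre tree and is free by Serre's theorem) is the standard and complete proof of exactly this statement.
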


If~all vertex and~edge groups of~$\mathcal{G}(\Gamma)$ are~infinite cyclic and~we~fix their generators $g_{v}$, $v \in \mathcal{V}$, and~$h_{e}$, $e \in \mathcal{E}$, then, for~any $e \in \mathcal{E}$, $\varepsilon = \pm 1$, the~homomorphism~$\varphi_{\varepsilon e}$ is~uniquely defined by~the~number $\lambda(\varepsilon e) \in \mathbb{Z} \setminus \{0\}$ such that $g_{e(\varepsilon)}^{\lambda(\varepsilon e)} = h_{\vphantom{(}e}^{\vphantom{(}}\varphi_{\vphantom{(}\varepsilon e}^{\vphantom{(}}$. Therefore, instead of~$\mathcal{G}(\Gamma)$, we~can consider the~directed \emph{graph with~labels~$\mathcal{L}(\Gamma)$} which is~obtained from~$\Gamma$ by~choosing a~direction for~each edge $e \in \mathcal{E}$ and~assigning non-zero integers $\lambda(+e)$ and~$\lambda(-e)$ to~the~ends $e(1)$ and~$e(-1)$ of~this edge. If~all vertex and~edge groups of~$\mathcal{G}(\Gamma)$ are~finite cyclic, then $\mathcal{G}(\Gamma)$ can be~replaced with~the~graph~$\mathcal{M}(\Gamma)$, in~which labels are~assigned not~only to~the~ends of~the~edges, but~also to~the~vertices. Namely, the~label~$\mu(v)$ at~a~vertex~$v$ means that the~group~$G_{v}$ is~of~order~$\mu(v)$. Of~course, the~equality $|\mu(e(1)) / \lambda(+e)| = |\mu(e(-1)) / \lambda(-e)|$ must hold for~each edge $e \in \mathcal{E}$. We~call the~group defined by~the~graph~$\mathcal{L}(\Gamma)$ ($\mathcal{M}(\Gamma)$) the~\emph{fundamental group of~the~graph with~labels~$\mathcal{L}(\Gamma)$}~($\mathcal{M}(\Gamma)$) and~denote it~by~$\pi_{1}(\mathcal{L}(\Gamma))$ (respectively,~$\pi_{1}(\mathcal{M}(\Gamma))$).

\section{Non-solv\-able GBS-groups. Proof of~Theorem~\ref{et01}}\label{es04}

It~follows from~the~last paragraph of~Section~\ref{es03} that each GBS-group can be~defined by~a~graph with~labels~$\mathcal{L}(\Gamma)$ for~some finite connected graph~$\Gamma$ and~vice versa, each graph with~labels~$\mathcal{L}(\Gamma)$ over~a~non-empty finite connected graph~$\Gamma$ defines some GBS-group. Throughout this section, we~assume that $\Gamma$ is~a~non-empty finite connected graph with~a~vertex set~$\mathcal{V}$ and~an~edge set~$\mathcal{E}$, $\mathcal{L}(\Gamma)$~is~a~graph with~labels~$\lambda(\varepsilon e)$, $e \in \mathcal{E}$,~$\varepsilon =\nolinebreak \pm 1$, and~$\mathfrak{G}$ is~the~corresponding GBS-group with~the~vertex groups $G_{v} = \langle g_{v} \rangle$, $v \in \mathcal{V}$, and the~edge subgroups $H_{\varepsilon e} = \big\langle g_{e(\varepsilon)}^{\lambda(\varepsilon e)} \big\rangle$, $e \in \mathcal{E}$, $\varepsilon = \pm 1$.

Suppose that $\mathcal{L}(\Gamma)$ contains an~edge~$e$ such that $e$ is~not~a~loop and~$|\lambda(\varepsilon e)| = 1$ for~some $\varepsilon = \pm 1$. If~we~choose a~maximal subtree of~$\Gamma$ containing~$e$ and~use~it to~define a~presentation of~the~group~$\mathfrak{G}$, then the~equality $g_{e(\varepsilon)}^{\vphantom{(}} = g_{e(-\varepsilon)}^{\lambda(\varepsilon e)\lambda(-\varepsilon e)}$ holds in~$\mathfrak{G}$ and,~therefore, the~generator~$g_{e(\varepsilon)}^{\vphantom{(}}$ can be~excluded from~this presentation. In~$\mathcal{L}(\Gamma)$, this operation corresponds to~the~contraction of~$e$ with~preliminary multiplication of~all labels around the~vertex~$e(\varepsilon)$ by~$\lambda(\varepsilon e)\lambda(-\varepsilon e)$. Such a~transformation of~$\mathcal{L}(\Gamma)$ is~called an~\emph{elementary collapse} (see~\cite[p.~480]{Levitt2007GT}).

The~graph~$\mathcal{L}(\Gamma)$ is~said to~be~\emph{reduced} if, for~any $e \in \mathcal{E}$, $\varepsilon = \pm 1$, it~follows from~the~equality $|\lambda(\varepsilon e)| = 1$ that $e$ is~a~loop~\cite[p.~224]{Forester2002GT}. Since $\Gamma$ is~finite, we~can always make $\mathcal{L}(\Gamma)$ reduced by~using a~finite number of~elementary collapses.

If~we~replace the~generator of~some vertex group with~its inverse, then all the~labels around the~corresponding vertex change sign. Similarly, replacing the~generator of~some edge group with~its inverse affects the~signs of~the~labels at~the~ends of~this edge. The~described replacements of~the~generators do~not change the~group~$\mathfrak{G}$, and~the~corresponding graph transformations are~called \emph{admissible changes of~signs}~\cite[p.~479]{Levitt2007GT}.

Let a~maximal subtree~$\mathcal{T}$ of~$\Gamma$ be~fixed. It~is~easy to~see that, by~applying suitable admissible changes of~signs, we~can make positive all the~labels at~the~ends of~the~edges of~$\mathcal{T}$. In~this case, we~say that the~graph~$\mathcal{L}(\Gamma)$ is~\emph{$\mathcal{T}$\nobreakdash-posi\-tive}.

An~element $a \in \mathfrak{G}$ is~called an~\emph{elliptic} one if it~is~conjugate to~an~element of~some vertex group. If~the~group~$\mathfrak{G}$ is~non-ele\-men\-tary, then any two elliptic elements $a,b \in \mathfrak{G} \setminus \{1\}$ are~commensurable, i.e.,~$\langle a \rangle \cap \langle b \rangle \ne 1$~\cite[Lemma~2.1]{Levitt2007GT}. This allows~us to~consider the~mapping $\Delta\colon \mathfrak{G} \to \mathbb{Q}^{*}$, which is~called the~\emph{modular homomorphism} of~$\mathfrak{G}$ and~is~defined as~follows.

Let $g \in \mathfrak{G}$. If~$a$ is~a~non-trivial elliptic element, then the~element $g^{-1}ag$ is~also elliptic and,~therefore, $g^{-1}a^{m}g = a^{n}$ for~some $m,n \in \mathbb{Z} \setminus \{0\}$. We~put $\Delta(g) = n / m$. It~is~known that the~number~$\Delta(g)$ does~not depend on~the~choice of~$a$, $m$, and~$n$~\cite{Kropholler1990CMH}.

The~largest cyclic normal subgroup of~$\mathfrak{G}$ is~called the~\emph{cyclic radical} of~this group and~is~denoted by~$C(\mathfrak{G})$. The~cyclic radical exists if $\mathfrak{G}$ is~not~isomorphic to~$\mathrm{BS}(1,1)$ or~$\mathrm{BS}(1,-1)$~\cite[p.~1808]{DelgadoRobinsonTimm2017CA}.

\begin{eproposition}\label{ep09}
Suppose that $\mathfrak{G}$ is~non-ele\-men\-tary and~$\operatorname{Im}\Delta \subseteq \{1,-1\}$. Suppose also that $\mathcal{T}$ is~a~maximal subtree of~$\Gamma$ used to~define a~presentation of~$\mathfrak{G}$\textup{,} $\mathcal{E}_{\mathcal{T}}$~is~the~edge set of~$\mathcal{T}$\textup{,} $\mathcal{L}(\Gamma)$~is~reduced and~$\mathcal{T}$\nobreakdash-posi\-tive. Then the~following statements hold.

\textup{1.\hspace{1ex}}The~relations
\[
1 \ne C(\mathfrak{G}) =\kern-4pt{} \bigcap_{\substack{e \in \mathcal{E},\\ \varepsilon = \pm 1}}\kern-3pt{} H_{\varepsilon e} \leqslant \bigcap_{v \in \mathcal{V}} G_{v}
\]
hold\textup{,} and~therefore the~number $\mu(v) = [G_{v} : C(\mathfrak{G})]$ is~defined and~finite for~each $v \in \mathcal{V}$.{\parfillskip=0pt{}\par}

\textup{2.\hspace{1ex}}The~least common multiple~$\mu$ of~the~numbers~$\mu(v)$\textup{,} $v \in \mathcal{V}$\textup{,} divides $\prod_{e \in \mathcal{E},\,\varepsilon = \pm 1} \lambda(\varepsilon e)$.{\parfillskip=0pt{}\par}

\textup{3.\hspace{1ex}}Suppose that $Z$ is~an~infinite cyclic group with~a~generator~$z$\textup{,} $A$~is~the~free abelian group with~the~basis $\{a_{q} \mid q \in \operatorname{Im}\Delta\}$\textup{,} and~$X$ is~the~splitting extension of~$Z$ by~$A$ such that $a_{q}^{-1}za_{q}^{\vphantom{1}} = z^{q}$\textup{,} $q \in \operatorname{Im}\Delta$. Then the~mapping of~the~generators of~$\mathfrak{G}$ into~$X$ given by~the~rule
\[
g_{v} \mapsto z^{\mu/\mu(v)}, \quad 
v \in \mathcal{V}, \quad
t_{e} \mapsto a_{\Delta(t_{e})}, \quad 
e \in \mathcal{E} \setminus \mathcal{E}_{\mathcal{T}},
\]
defines a~homomorphism $\tau \colon \mathfrak{G} \to X$.

\textup{4.\hspace{1ex}}Suppose that $k \geqslant 1$\textup{,} $Z_{k}^{\vphantom{\mu}} = \langle z_{k}^{\vphantom{\mu}};\ z_{k}^{\mu k} = 1 \rangle$\textup{,} $B = \langle b;\ b^{2} = 1 \rangle$\textup{,} $\eta_{k}\colon \mathfrak{G} \to \mathfrak{G} / (C(\mathfrak{G}))^{k}$ is~the~natural homomorphism\textup{,} and~$X_{k}$ is~either the~group~$Z_{k}^{\vphantom{1}}$ \textup{(}if~$\operatorname{Im}\Delta = \{1\}$\textup{)} or~the~splitting extension of~$Z_{k}^{\vphantom{1}}$ by~$B$ such that $b^{-1}z_{k}^{\vphantom{1}}b = z_{k}^{-1}$ \textup{(}if~$\operatorname{Im}\Delta = \{1,-1\}$\textup{)}. Then

\textup{a)\hspace{1ex}}the~map $\chi_{k}\colon X \to X_{k}$ given by~the~rule
\begin{align*}
a_{1}^{m}z^{\ell} &\mapsto z_{k}^{\ell},\phantom{b^{n}}\quad \ell,m \in \mathbb{Z},\phantom{,n}\quad 
\text{if}\ \operatorname{Im}\Delta = \{1\},\\
a_{1}^{m}a_{-1}^{n}z^{\ell} &\mapsto b^{n}z_{k}^{\ell},\quad \ell,m,n \in \mathbb{Z},\quad 
\text{if}\ \operatorname{Im}\Delta = \{1,-1\}
\end{align*}
is~a~homomorphism\textup{;}

\textup{b)\hspace{1ex}}the~map of~the~generators of~$\mathfrak{G} / (C(\mathfrak{G}))^{k}$ into~$X_{k}$ given by~the~rule
\[
g_{\vphantom{k}v}^{\vphantom{\mu}} \mapsto z_{k}^{\mu / \mu(v)}, \quad v \in \mathcal{V}, \quad 
t_{e} \mapsto \begin{cases}
1 & \text{if}\ \Delta(t_{e}) = 1, \\
b & \text{if}\ \Delta(t_{e}) = -1,
\end{cases} \quad 
e \in \mathcal{E} \setminus \mathcal{E}_{\mathcal{T}},
\]
defines a~homomorphism $\tau_{k}\colon \mathfrak{G} / (C(\mathfrak{G}))^{k} \to X_{k}$\textup{;}

\textup{c\kern1pt)\hspace{1ex}}the~equality $\tau\chi_{k} = \eta_{k}\tau_{k}$ holds\textup{;}

\textup{d)\hspace{1ex}}the~kernel of~$\tau_{k}$ is~a~free group.
\end{eproposition}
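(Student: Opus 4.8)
The plan is to prove the four statements in the listed order, since~1 underlies 2--4 and~3 underlies~4.

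\smallskip

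First, for Statement~1 I would set $D=\bigcap_{e\in\mathcal E,\,\varepsilon=\pm1}H_{\varepsilon e}$ and show $C(\mathfrak G)=D$. As $\mathfrak G$ is non-elementary, all non-trivial elliptic elements are pairwise commensurable (\cite[Lemma~2.1]{Levitt2007GT}), so the finitely many infinite cyclic groups $H_{\varepsilon e}$ have infinite cyclic intersection; hence $D\ne1$, and $D\leqslant G_v$ for every $v$ because each vertex is an end of some edge. To obtain normality: each $G_v$ is abelian and contains $D$, so centralizes it; and if $e\notin\mathcal E_{\mathcal T}$ and $d$ generates $D$, then $d$ and $t_e^{-1}dt_e$ both lie in $H_{-e}$ (the latter since $t_e^{-1}H_{+e}t_e=H_{-e}$ and $d\in H_{+e}$), hence are powers of one generator of $H_{-e}$, and evaluating $\Delta(t_e)$ on this pair together with $\operatorname{Im}\Delta\subseteq\{1,-1\}$ forces $t_e^{-1}dt_e=d^{\pm1}\in D$; likewise $t_edt_e^{-1}\in D$. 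Thus $D$ is a non-trivial cyclic normal subgroup, so $D\leqslant C(\mathfrak G)$. Conversely, $C(\mathfrak G)=\langle c\rangle$ is infinite cyclic and contains $D$ with finite index, so $c$ is commensurable with the vertex groups and therefore elliptic; since $\langle c\rangle$ is normal, $\operatorname{Fix}(c)$ is a non-empty $\mathfrak G$-invariant subtree of the Bass--Serre tree, and as $\mathcal L(\Gamma)$ is reduced the action on that tree is minimal, so $\operatorname{Fix}(c)$ is the whole tree and $c$ lies in every edge stabilizer, i.e. in every $H_{\varepsilon e}$. Hence $C(\mathfrak G)=D$, and $\mu(v)=[G_v:C(\mathfrak G)]$ is finite because $C(\mathfrak G)\leqslant H_{\varepsilon e}\leqslant G_v$ with $[G_v:H_{\varepsilon e}]=|\lambda(\varepsilon e)|$ whenever $e(\varepsilon)=v$.

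\smallskip

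For Statement~2, put $P=\prod_{e\in\mathcal E,\,\varepsilon=\pm1}|\lambda(\varepsilon e)|$; it suffices to prove $g_v^{P}\in C(\mathfrak G)$ for every $v$, for then $\mu(v)=[G_v:C(\mathfrak G)]$ divides $[G_v:\langle g_v^{P}\rangle]=P$ and hence $\mu\mid P=\bigl|\prod_{e,\varepsilon}\lambda(\varepsilon e)\bigr|$. Fixing $v$ and another vertex $w$, I would take the geodesic $v=u_0,u_1,\dots,u_d=w$ in $\mathcal T$, with edges $f_1,\dots,f_d$ and positive (by $\mathcal T$-positivity) labels $p_i=\lambda(f_i\text{ at }u_{i-1})$, $q_i=\lambda(f_i\text{ at }u_i)$. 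The relations $g_{u_{i-1}}^{p_i}=g_{u_i}^{q_i}$ telescope into $g_v^{P}=g_w^{N_w}$ with $N_w=P\prod_{i=1}^{d}q_i/p_i$ (at each step the exponent already reached is divisible by the next $p_i$, since $p_1\cdots p_i\mid P$). A prime-by-prime estimate then gives $P_w\mid N_w$, where $P_w=\prod_{\text{ends at }w}|\lambda|$: the distinct ends $p_1,\dots,p_d,q_1,\dots,q_{d-1}$ are not ends at $w$, so they occur in $P$ on top of the factor $P_w$, and substituting this into the formula for $N_w$ leaves ample room. Hence $g_v^{P}\in\langle g_w^{P_w}\rangle\leqslant\langle g_w^{\lambda(\varepsilon e)}\rangle=H_{\varepsilon e}$ whenever $e(\varepsilon)=w$; letting $w$ and $e$ vary, $g_v^{P}\in\bigcap_{e,\varepsilon}H_{\varepsilon e}=C(\mathfrak G)$.

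\smallskip

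Statements~3 and~4 are then bookkeeping. Because $\mathcal L(\Gamma)$ is $\mathcal T$-positive, $g_v^{\mu(v)}$ is independent of $v$ --- call it $c$ --- and $\langle c\rangle=C(\mathfrak G)$; raising $t_e^{-1}g_{e(+)}^{\lambda(+e)}t_e=g_{e(-)}^{\lambda(-e)}$ to the power $[H_{+e}:C(\mathfrak G)]$ for $e\notin\mathcal E_{\mathcal T}$ yields $t_e^{-1}c^{\operatorname{sgn}\lambda(+e)}t_e=c^{\operatorname{sgn}\lambda(-e)}$, whence $\Delta(t_e)=\operatorname{sgn}(\lambda(+e)\lambda(-e))$. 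Using this and $\lambda(+e)/\mu(e(+))=\lambda(-e)/\mu(e(-))$ for $e\in\mathcal E_{\mathcal T}$ (immediate from $C(\mathfrak G)\leqslant H_{+e}=H_{-e}$), one checks that the assignment in Statement~3 carries every defining relation of $\mathfrak G$ to an identity of $X$, so $\tau$ is a homomorphism. Part~4(a) is the direct verification that $a_1\mapsto1$, $a_{-1}\mapsto b$, $z\mapsto z_k$ respects the defining relations of $X$; part~4(b) combines the analogous check for the images of $g_v$ and $t_e$ with the remark that $(C(\mathfrak G))^{k}=\langle c^{k}\rangle$ is killed since its image is $z_k^{\mu k}=1$; part~4(c) is verified on the generators $g_v,t_e$ of $\mathfrak G$. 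For part~4(d), $\mathfrak G/(C(\mathfrak G))^{k}$ is the fundamental group of the graph of finite cyclic groups obtained from $\mathcal G(\Gamma)$ by factoring out $\langle c^{k}\rangle$; its vertex group at $v$ has order $\mu(v)k$, and $\tau_k$ maps it onto the order-$\mu(v)k$ subgroup $\langle z_k^{\mu/\mu(v)}\rangle$ of $X_k$, hence isomorphically, so $\ker\tau_k$ meets every vertex group trivially and is free by Proposition~\ref{ep08}. The main obstacles are the two inclusions of Statement~1 --- $D\leqslant C(\mathfrak G)$, which needs $\operatorname{Im}\Delta\subseteq\{1,-1\}$, and $C(\mathfrak G)\leqslant D$, which needs minimality of the Bass--Serre action --- and the valuation count of Statement~2, which must be arranged so that $N_w$ always dominates $P_w$; after that, Statements~3 and~4 are essentially formal.
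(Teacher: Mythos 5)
Your proposal is correct, but for Statements 1--3 it takes a genuinely different route from the paper: the paper disposes of these three statements in a single line, declaring them special cases of Propositions 4.4 and 5.1 of \cite{Sokolov2021JA}, whereas you reprove them from scratch. Your Statement 1 runs through Bass--Serre theory: pairwise commensurability of non-trivial elliptic elements (\cite[Lemma~2.1]{Levitt2007GT}) gives $D=\bigcap H_{\varepsilon e}\ne 1$, the hypothesis $\operatorname{Im}\Delta\subseteq\{1,-1\}$ gives normality of $D$ (hence $D\leqslant C(\mathfrak G)$), and minimality of the action on the Bass--Serre tree --- which is exactly where reducedness enters --- gives $C(\mathfrak G)\leqslant D$. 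Your Statement 2 is an explicit telescoping computation along geodesics of $\mathcal T$, with the key point that the tree labels $p_1,\dots,p_d$ cancelled out of $P$ are not ends at the target vertex $w$, so $P_w$ still divides $N_w$; and your Statement 3 rests on $\Delta(t_e)=\operatorname{sgn}(\lambda(+e)\lambda(-e))$ and the $v$-independence of $g_v^{\mu(v)}$, which indeed requires $\mathcal T$-positivity. These arguments check out, though they import standard facts (a power of an element is elliptic only if the element is, reduced decompositions are minimal, $C(\mathfrak G)$ contains every cyclic normal subgroup) that you state without proof. For Statement 4 your argument essentially coincides with the paper's: $\chi_k$ is verified directly; $\tau_k$ is obtained by checking the relations inherited from $\mathfrak G$ (via agreement of $\tau\chi_k$ with the candidate map on generators) together with the observation that $(C(\mathfrak G))^k=\langle g_v^{\mu(v)k}\rangle$ is killed in $X_k$; and freeness of $\ker\tau_k$ follows from injectivity of $\tau_k$ on the order-$\mu(v)k$ vertex groups of $\mathfrak G/(C(\mathfrak G))^k$ combined with Proposition~\ref{ep08}. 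The paper's citation buys brevity; your version buys self-containedness at the cost of leaning on unproved Bass--Serre background.
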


\begin{proof}
Statements~1\nobreakdash--3 are~special cases of~Propositions 4.4 and~5.1 from~\cite{Sokolov2021JA}. Let~us prove Statement~4.

The~fact that $\chi_{k}$ is~a~homomorphism can be~verified directly. Let $\xi_{k}$ be~the~mapping of~words that continues the~map from~Statement~4\nobreakdash-b. It~is~obvious that $\tau\chi_{k}(g) = \eta_{k}\xi_{k}(g)$ for~each generator~$g$ of~$\mathfrak{G}$. Therefore, the~mapping~$\xi_{k}$ takes all defining relations of~$\mathfrak{G} / (C(\mathfrak{G}))^{k}$ inherited from~$\mathfrak{G}$ into~the~equalities valid in~$X_{k}$. In~addition to~them, the~presentation of~the~quotient group $\mathfrak{G} / (C(\mathfrak{G}))^{k}$ contains relations of~the~form $\omega = 1$, where $\omega$ is~an~arbitrary word that defines an~element of~$(C(\mathfrak{G}))^{k}$. Let $u \in \mathcal{V}$. Since $\mu(u) = [G_{u} : C(\mathfrak{G})]$, the~subgroup~$(C(\mathfrak{G}))^{k}$ is~generated by~the~element~$g_{u}^{\mu(u)k}$. Therefore, all these additional relations are~derivable from~the~relation $g_{u}^{\mu(u)k} = 1$. It~follows from~the~definitions of~$\xi_{k}$ and~$Z_{k}$ that $\xi_{k}$ takes the~latter relation into~the~equality valid in~$X_{k}$. Thus, $\tau_{k}$~is~a~homomorphism and~$\tau\chi_{k} = \eta_{k}\tau_{k}$.

It~is~obvious that the~quotient group $\mathfrak{G} / (C(\mathfrak{G}))^{k}$ is~the~fundamental group of~the~graph with~labels~$\mathcal{M}(\Gamma)$ which is~obtained from~$\mathcal{L}(\Gamma)$ by~assigning to~each vertex $v \in \mathcal{V}$ the~label~$\mu(v)k$. Since the~order of~the~group $Z_{k}^{\mu/\mu(v)} = \langle g_{\vphantom{k}v}^{\vphantom{\mu}} \rangle\tau_{k}^{\vphantom{\mu}}$ is~also equal to~$\mu(v)k$, the~homomorphism~$\tau_{k}^{\vphantom{\mu}}$ acts injectively on~all vertex groups. Now it~follows from~Proposition~\ref{ep08} that its kernel is~a~free group.
\end{proof}

\begin{eproposition}\label{ep10}
\textup{\cite[Theorem~3]{Sokolov2021JA}}
Suppose that $\mathcal{C}$ is~a~root class of~groups consisting only of~periodic groups\textup{,} the~group~$\mathfrak{G}$ is~non-solv\-able\textup{,} and~the~graph~$\mathcal{L}(\Gamma)$ is~reduced. Then the~following statements hold.

\textup{1.\hspace{1ex}}If~$\operatorname{Im}\Delta = \{1\}$\textup{,} then $\mathfrak{G}$ is~residually a~$\mathcal{C}$\nobreakdash-group if and~only~if all the~labels of~$\mathcal{L}(\Gamma)$ are~$\mathfrak{P}(\mathcal{C})$\nobreakdash-num\-bers.

\textup{2.\hspace{1ex}}If~$\operatorname{Im}\Delta = \{1,-1\}$\textup{,} then $\mathfrak{G}$ is~residually a~$\mathcal{C}$\nobreakdash-group if and~only~if all the~labels of~$\mathcal{L}(\Gamma)$ are~$\mathfrak{P}(\mathcal{C})$\nobreakdash-num\-bers and~$2 \in \mathfrak{P}(\mathcal{C})$.

\textup{3.\hspace{1ex}}If~$\operatorname{Im}\Delta \nsubseteq \{1,-1\}$\textup{,} then $\mathfrak{G}$ is~not~residually a~$\mathcal{C}$\nobreakdash-group.
\end{eproposition}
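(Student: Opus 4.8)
The plan is to reduce all three cases to the corresponding criteria for ordinary Baumslag--Solitar groups together with the structural information about the modular homomorphism. The key observation is that a reduced GBS-graph $\mathcal{L}(\Gamma)$ with a loop carrying labels $(m,n)$ contains $\mathrm{BS}(m,n)$ as a vertex-group-plus-loop sub-configuration, and more generally each edge subgroup $H_{\varepsilon e}$ sits inside the corresponding vertex group $G_{e(\varepsilon)}$ with index $|\lambda(\varepsilon e)|$. Since a root class is closed under subgroups, being residually a $\mathcal{C}$-group passes to all finitely generated subgroups, so I would first extract from each edge a copy of $\mathrm{BS}(|\lambda(+e)|,|\lambda(-e)|)$ (for a loop) or a free product with amalgamation over a common subgroup (for a non-loop) and invoke the known residual criteria for those building blocks. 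This gives the necessity of ``all labels are $\mathfrak{P}(\mathcal{C})$-numbers'' in each case: if some $|\lambda(\varepsilon e)|$ had a prime divisor outside $\mathfrak{P}(\mathcal{C})$, the index of a vertex group over an edge subgroup would be non-$\mathfrak{P}(\mathcal{C})$, and no homomorphism onto a periodic $\mathcal{C}$-group could separate an appropriate element of $G_{e(\varepsilon)}\setminus H_{\varepsilon e}$ from $1$.

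Next I would handle the dichotomy coming from $\operatorname{Im}\Delta$. Statement 3 should be the easiest: if $\operatorname{Im}\Delta\nsubseteq\{1,-1\}$ then some $\Delta(g)=n/m$ with $|m|\neq|n|$, which means $\mathfrak{G}$ contains a copy of $\mathrm{BS}(m,n)$ with $0<m<|n|$ and $m\neq|n|$; by Proposition~\ref{ep05}.1 such a group is not even residually finite, and a fortiori not residually a $\mathcal{C}$-group (any periodic $\mathcal{C}$-group is locally finite when finitely generated, hence residual $\mathcal{C}$ implies residual finiteness on f.g.\ subgroups). For Statement 2, when $\operatorname{Im}\Delta=\{1,-1\}$, the presence of an element $g$ with $\Delta(g)=-1$ forces a copy of $\mathrm{BS}(m,-m)$ inside $\mathfrak{G}$, and Proposition~\ref{ep05}.3 tells us residual $\mathcal{C}$-ness of that subgroup requires $2\in\mathfrak{P}(\mathcal{C})$; combined with the label condition this gives necessity. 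For Statement 1 there is no such extra constraint, so only the label condition survives as a necessary condition.

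The substantial direction is sufficiency, and here I would lean on Proposition~\ref{ep09}. Assume $\operatorname{Im}\Delta\subseteq\{1,-1\}$ and that all labels are $\mathfrak{P}(\mathcal{C})$-numbers (and $2\in\mathfrak{P}(\mathcal{C})$ in the second case). Normalizing $\mathcal{L}(\Gamma)$ to be reduced and $\mathcal{T}$-positive, Proposition~\ref{ep09}.2 shows that $\mu$, the l.c.m.\ of the $\mu(v)$, divides a product of labels and is therefore a $\mathfrak{P}(\mathcal{C})$-number; hence for every $k$ that is itself a $\mathfrak{P}(\mathcal{C})$-number, the finite group $X_k$ built in Proposition~\ref{ep09}.4 (of order $\mu k$, or $2\mu k$ in the $\{1,-1\}$ case) is a finite solvable $\mathfrak{P}(\mathcal{C})$-group, hence lies in $\mathcal{C}$ by Proposition~\ref{ep04}. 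The homomorphisms $\tau_k$ have free kernel $K_k$ by Proposition~\ref{ep09}.4d, and as $k$ ranges over all $\mathfrak{P}(\mathcal{C})$-numbers the intersection $\bigcap_k K_k$ is trivial on the cyclic radical (since $(C(\mathfrak{G}))^k$ shrinks to $1$) while each $K_k$ is free. So any non-trivial $g\in\mathfrak{G}$ is either detected by some $\tau_k$ directly, or lies in all the $K_k$'s, in which case $g$ lives in a free group and one separates it using residual finiteness of free groups refined to $\mathcal{C}$: a free group is residually a finite $p$-group for every prime $p\in\mathfrak{P}(\mathcal{C})$, hence residually $\mathcal{C}$. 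The main obstacle I anticipate is the bookkeeping in this last step --- ensuring that the free-group quotients can be taken inside $\mathcal{C}$ compatibly, i.e.\ that the ambient finite quotient witnessing $g\neq 1$ really is a $\mathcal{C}$-group and not merely finite; this is where one must use that $\mathcal{C}$ is a root class (closure under extensions lets one glue the $X_k$-quotient with the free-group quotient of its kernel into a single $\mathcal{C}$-quotient of $\mathfrak{G}$) rather than merely subgroup-closed.
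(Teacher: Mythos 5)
First, a point of order: the paper does not prove Proposition~\ref{ep10} at all --- it is imported verbatim from \cite[Theorem~3]{Sokolov2021JA} --- so there is no in-paper proof to measure you against. Judged on its own, your sufficiency direction is sound in outline and is consistent with the machinery the paper actually deploys in Proposition~\ref{ep12}: $\mu$ is a $\mathfrak{P}(\mathcal{C})$-number by Proposition~\ref{ep09}, each $\mathfrak{G}/(C(\mathfrak{G}))^{k}$ is (free)-by-$\mathcal{FS}_{\mathfrak{P}(\mathcal{C})}$, the Gruenberg condition glues a finite $p$-quotient of the free kernel with the $X_{k}$-quotient into a single $\mathcal{C}$-quotient, and $\bigcap_{k}(C(\mathfrak{G}))^{k}=1$ over $\mathfrak{P}(\mathcal{C})$-numbers~$k$. (Phrase it that way rather than via ``$g$ lies in all the $K_{k}$'s'': your $K_{k}$ live in different quotient groups, so their intersection is not literally defined.)

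The necessity direction, however, contains genuine gaps. (i)~The claimed obstruction --- that a label with a prime divisor outside $\mathfrak{P}(\mathcal{C})$ prevents separating some element of $G_{e(\varepsilon)}\setminus H_{\varepsilon e}$ from~$1$ --- is wrong: in $\mathrm{BS}(2,2)$ with $\mathcal{C}$ the class of finite $3$-groups, every element of $\langle a\rangle\setminus\langle a^{2}\rangle$ is separated from~$1$ by the homomorphism onto~$\mathbb{Z}_{3}$ killing~$t$. The actual mechanism is that the order of $g_{v}\sigma$, being a $\mathfrak{P}(\mathcal{C})$-number, forces $g_{v}\sigma$ \emph{into} $H_{\varepsilon e}\sigma$, whereupon a specific non-trivial element (a commutator such as $[a,t]$ in $\mathrm{BS}(p,p)$, $p\notin\mathfrak{P}(\mathcal{C})$) dies in every $\mathcal{C}$-quotient; you need to exhibit such an element. (ii)~You assert that $\Delta(g)=n/m$ yields an embedded copy of $\mathrm{BS}(m,n)$ (and likewise of $\mathrm{BS}(m,-m)$ in Statement~2); a priori $\langle a,g\rangle$ is only a quotient of $\mathrm{BS}(m,n)$, and identifying it as the full group requires a Bass--Serre argument --- alternatively, run the Meskin-type computation directly in an arbitrary periodic quotient of~$\mathfrak{G}$. (iii)~The parenthetical justification that residual $\mathcal{C}$-ness implies residual finiteness because finitely generated periodic $\mathcal{C}$-groups are locally finite is false: root classes of periodic groups, such as the periodic $\mathfrak{P}$-groups of finite exponent listed in Section~\ref{es01}, contain infinite finitely generated Burnside groups. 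The conclusion you want survives only because the non-residual-finiteness proofs for $\mathrm{BS}(m,n)$ use nothing beyond the finiteness of the order of the image of~$a$, which does hold in periodic quotients --- but that is the argument you must actually write down.
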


\begin{eproposition}\label{ep11}
\textup{\cite[Theorem~1]{Sokolov2015MN}}
If~$\mathcal{C}$ is~a~root class of~groups consisting only of~finite groups\textup{,} then any extension of~a~free group by~a~$\mathcal{C}$\nobreakdash-group is~conjugacy $\mathcal{C}$\nobreakdash-sepa\-ra\-ble.
\end{eproposition}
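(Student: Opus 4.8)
The plan is to use the free normal subgroup. Write the extension as $1 \to F \to G \xrightarrow{\ \pi\ } Q \to 1$ with $F$ free and $Q = G/F$ a finite $\mathcal{C}$\nobreakdash-group. The key structural observation is that the normal subgroups $N$ of $G$ with $N \leqslant F$ and $F/N \in \mathcal{C}$ are cofinal among all $N \trianglelefteq G$ with $G/N \in \mathcal{C}$: if $F/N_1, F/N_2 \in \mathcal{C}$, then $F/(N_1 \cap N_2)$ embeds in $F/N_1 \times F/N_2 \in \mathcal{C}$, and for such $N$ the group $G/N$ is an extension of $F/N \in \mathcal{C}$ by $Q \in \mathcal{C}$, hence lies in $\mathcal{C}$. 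Moreover, since $\mathcal{C}$ is a non-trivial root class of finite groups it contains all finite $p$\nobreakdash-groups for some prime $p$, so the free group $F$ is residually $\mathcal{C}$; it follows that these $N$ intersect in $1$ and that $G$ itself is residually $\mathcal{C}$. Thus it suffices, given $g \nsim_G h$, to separate the conjugacy classes of $g$ and $h$ by a homomorphism onto $G/N$ for one such $N$.

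I would first dispose of two easy cases. If $\pi(g) \nsim_Q \pi(h)$, then $\pi$ already works. Otherwise, after replacing $h$ by a conjugate, we may assume $\pi(g) = \pi(h)$, so $h = gc$ with $c \in F$. Next, if exactly one of $g, h$ has finite order, then, $G$ being residually $\mathcal{C}$, some $G/N$ as above makes the infinite-order one have order strictly larger than the bounded order of the other, so their images cannot be conjugate. This leaves the situation $\pi(g) = \pi(h)$ with $g$ and $h$ both of finite order or both of infinite order.

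Now I would localise. If $x^{-1}gx = h$ in $G$, then $\pi(x)$ centralises $\pi(g)$, so the conjugator lies in $P := \pi^{-1}\big(C_Q(\pi(g))\big)$, a subgroup of finite index containing $F$; hence $g \sim_G h$ if and only if $g \sim_P h$, and $P$ is again an extension of $F$ by the $\mathcal{C}$\nobreakdash-group $P/F$, now with $\pi(g) = \pi(h)$ central in $P/F$. Consequently $\langle F, g\rangle$ and $\langle F, h\rangle$ are normal in $P$, and the equation $x^{-1}gx = h$ in $P$ unwinds, via Bass--Serre theory (recall that $G$, hence $P$, is virtually free), into a concrete statement about the free group $F$ together with the finite action of $P/F$: in the torsion case it reduces, using that a virtually free group has only finitely many conjugacy classes of finite subgroups together with the conjugacy criterion for elliptic elements of a graph of groups, to finitely many separation tasks among elements of finite vertex groups; in the infinite-order case it reduces, using the conjugacy criterion for hyperbolic elements together with the fact that conjugacy classes of a free group are closed in its pro-$\mathcal{C}$ topology, to separating finitely many elements of $F$ from certain subsets of $F$.

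This last point is the main obstacle, and it is where all the hypotheses on $\mathcal{C}$ are used: it is not enough to separate in the class of all finite groups, one must do so within $\mathcal{C}$. This is possible because (i) modulo $F$ one is already inside $\mathcal{C}$ and $\mathcal{C}$ is closed under extensions, and (ii) inside the free group $F$ there is an abundant supply of $\mathcal{C}$\nobreakdash-quotients, and the subgroups produced by the conjugacy criteria (centralizers of torsion elements, edge subgroups of a graph-of-groups decomposition, and the finite vertex groups) are all finite or cyclic-by-finite, so that in a virtually free residually-$\mathcal{C}$ group they are separable and conjugacy distinguished in the pro-$\mathcal{C}$ topology — precisely what is needed to promote the set-theoretic separations coming from the criteria to a homomorphism onto a $\mathcal{C}$\nobreakdash-group. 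Assembling (i) and (ii) — that is, combining the conjugacy criterion for iterated amalgamated free products and HNN-extensions over finite subgroups with the $\mathcal{C}$\nobreakdash-separability of those subgroups and of the relevant double cosets — gives that $G$ is conjugacy $\mathcal{C}$\nobreakdash-separable.
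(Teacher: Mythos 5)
First, note that the paper does not prove this proposition at all: it is quoted verbatim as Theorem~1 of \cite{Sokolov2015MN}, so there is no in-paper argument to compare your route against; your sketch has to stand on its own. Its preliminary reductions are sound: the normal subgroups $N\leqslant F$ with $N\trianglelefteq G$ and $F/N\in\mathcal{C}$ do give $\mathcal{C}$-quotients of $G$ and intersect trivially (a non-trivial root class of finite groups contains all finite $p$-groups for some $p$, and free groups are residually finite-$p$); the reduction to $\pi(g)=\pi(h)$, the finite/infinite order dichotomy, and the localization to $P=\pi^{-1}(C_Q(\pi(g)))$ are all standard and correct, although in the last step you should say explicitly how a separating $\mathcal{C}$-quotient of $P$ is converted into one of $G$ (replace its kernel $M$ by $\bigcap_{x\in G}x^{-1}Mx$ and check that a conjugator in $G$ modulo this subgroup would have to lie in $P$).

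The genuine gap is that the entire content of the theorem is concentrated in your last paragraph, and there it is only named, not carried out. Saying that one "combines the conjugacy criterion for iterated amalgamated products and HNN-extensions over finite subgroups with the $\mathcal{C}$-separability of those subgroups and of the relevant double cosets" is a description of the proof one would have to write, not a proof: the ingredients you invoke --- conjugacy distinguishedness of cyclic subgroups in the pro-$\mathcal{C}$ topology of a virtually free group, closedness of the relevant double cosets in that topology (a pro-$\mathcal{C}$ analogue of Ribes--Zalesskii, which is \emph{not} automatic outside the full profinite topology), and the promotion of the set-theoretic separations to a single homomorphism onto a $\mathcal{C}$-group --- are themselves substantial theorems, none of which is established here, and some of which are essentially of the same depth as the statement being proved. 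In addition, one step is simply false in the required generality: a virtually free group need not have finitely many conjugacy classes of finite subgroups when $F$ has infinite rank (an infinite free product of copies of $\mathbb{Z}/2\mathbb{Z}$ is an extension of a free group by $\mathbb{Z}/2\mathbb{Z}$ and has infinitely many such classes), and the infinitely generated case cannot be excluded --- in the paper's application the free group is the kernel of $\tau_{k}$ in Proposition~\ref{ep09}, which is typically of infinite rank --- so the appeal to a finite graph of finite groups and to that finiteness statement must be removed or replaced by an argument localized to the two given elements.
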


\begin{eproposition}\label{ep12}
Suppose that the~group~$\mathfrak{G}$ is~non-solv\-able\textup{,} $\mathcal{T}$~is~a~maximal subtree of~$\Gamma$ used to~define a~presentation of~$\mathfrak{G}$\textup{,} the~graph~$\mathcal{L}(\Gamma)$ is~reduced and~$\mathcal{T}$\nobreakdash-posi\-tive. If~$\mathcal{C}$ is~a~root class of~groups consisting only of~periodic groups and~$\mathfrak{G}$ is~residually a~$\mathcal{C}$\nobreakdash-group\textup{,} then $\mathfrak{G}$ is~conjugacy $\mathcal{FS}_{\mathfrak{P}(\mathcal{C})}$\nobreakdash-sepa\-ra\-ble.
\end{eproposition}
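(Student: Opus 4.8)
The plan is to split the argument according to the image of the modular homomorphism, using Proposition~\ref{ep10} to identify which case can actually occur. Since $\mathfrak{G}$ is non-solvable and residually a $\mathcal{C}$-group, Proposition~\ref{ep10} forces $\operatorname{Im}\Delta \subseteq \{1,-1\}$; moreover all labels of $\mathcal{L}(\Gamma)$ are $\mathfrak{P}(\mathcal{C})$-numbers, and $2 \in \mathfrak{P}(\mathcal{C})$ in the case $\operatorname{Im}\Delta = \{1,-1\}$. In particular $\operatorname{Im}\Delta \subseteq \{1,-1\}$ means $\mathfrak{G}$ is not isomorphic to a non-residually-finite Baumslag--Solitar group, and Proposition~\ref{ep09} applies: its cyclic radical $C(\mathfrak{G})$ is non-trivial and equals the intersection of all edge subgroups, so the indices $\mu(v) = [G_v : C(\mathfrak{G})]$ are finite and $\mu := \operatorname{lcm}_v \mu(v)$ divides $\prod_{e,\varepsilon}\lambda(\varepsilon e)$, hence $\mu$ is a $\mathfrak{P}(\mathcal{C})$-number.

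Next I would exploit the homomorphisms $\tau_k\colon \mathfrak{G}/(C(\mathfrak{G}))^{k} \to X_k$ built in Proposition~\ref{ep09}. Each $X_k$ is a metabelian group: it is either $Z_k = \langle z_k;\ z_k^{\mu k}=1\rangle$ (when $\operatorname{Im}\Delta=\{1\}$) or the split extension of $Z_k$ by a group of order $2$ (when $\operatorname{Im}\Delta=\{1,-1\}$); in either case $X_k$ is a finite solvable group whose order $\mu k$ or $2\mu k$ is a $\mathfrak{P}(\mathcal{C})$-number once $k$ is chosen to be a $\mathfrak{P}(\mathcal{C})$-number, using $2\in\mathfrak{P}(\mathcal{C})$ in the second case. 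Thus $X_k \in \mathcal{FS}_{\mathfrak{P}(\mathcal{C})} \subseteq \mathcal{C}$ by Proposition~\ref{ep04}. The kernel of $\tau_k$ is free by Statement~4-d of Proposition~\ref{ep09}, so $\mathfrak{G}/(C(\mathfrak{G}))^{k}$ is an extension of a free group by a group in $\mathcal{FS}_{\mathfrak{P}(\mathcal{C})}$, hence it is conjugacy $\mathcal{FS}_{\mathfrak{P}(\mathcal{C})}$-separable by Proposition~\ref{ep11}. It remains to pass from these quotients back to $\mathfrak{G}$: given $g, g' \in \mathfrak{G}$ with $g \nsim_{\mathfrak{G}} g'$, I want to find a single $k$ (a $\mathfrak{P}(\mathcal{C})$-number) such that the images of $g,g'$ in $\mathfrak{G}/(C(\mathfrak{G}))^{k}$ are still non-conjugate, and then apply conjugacy $\mathcal{FS}_{\mathfrak{P}(\mathcal{C})}$-separability of that quotient together with Proposition~\ref{ep04} to complete the separation inside $\mathcal{C}$.

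The main obstacle is exactly this last step: conjugacy separation does not automatically survive passage to the quotients $\mathfrak{G}/(C(\mathfrak{G}))^{k}$, because two elements non-conjugate in $\mathfrak{G}$ may become conjugate modulo $(C(\mathfrak{G}))^{k}$ for every $k$ unless one controls conjugacy carefully. To handle it I would analyze conjugacy in $\mathfrak{G}$ using the structure of $C(\mathfrak{G})$ as the common edge subgroup: since $C(\mathfrak{G})$ is central-like (normal, cyclic, sitting inside every vertex group), the quotient map kills a nested descending family $(C(\mathfrak{G}))^{k}$ whose intersection is trivial, and one should show that the $\mathfrak{G}$-conjugacy class of any element, when intersected with any coset it meets, is ``detected'' at some finite level $k$. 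Concretely, I expect to reduce to the case where both $g$ and $g'$ are elliptic or both are non-elliptic (the modular homomorphism and the abelianization already separate these types through $\mathcal{C}$-quotients); for elliptic elements the separation reduces to a statement about powers of $g_v$ and exponents modulo $\mu(v)k$, which is a divisibility condition solvable by choosing $k$ large among $\mathfrak{P}(\mathcal{C})$-numbers; for non-elliptic elements I would use that the free kernel of $\tau_k$ together with Proposition~\ref{ep11} already gives conjugacy control, and that a non-conjugacy witnessed in $\mathfrak{G}$ is inherited by $\mathfrak{G}/(C(\mathfrak{G}))^{k}$ for suitable $k$ because any conjugating element would have to act compatibly with the (finite) action on $C(\mathfrak{G})/(C(\mathfrak{G}))^{k}$. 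Assembling these cases, every pair of non-conjugate elements of $\mathfrak{G}$ is separated by a homomorphism onto a group in $\mathcal{FS}_{\mathfrak{P}(\mathcal{C})}\subseteq\mathcal{C}$, which is the assertion.
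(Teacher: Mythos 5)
Your setup (Proposition~\ref{ep10} forcing $\operatorname{Im}\Delta\subseteq\{1,-1\}$, the groups $X_k$ and homomorphisms $\tau_k$, $\chi_k$, $\eta_k$ from Proposition~\ref{ep09}, the observation that $\mu$ is a $\mathfrak{P}(\mathcal{C})$\nobreakdash-number, and the use of Proposition~\ref{ep11} to get conjugacy $\mathcal{FS}_{\mathfrak{P}(\mathcal{C})}$\nobreakdash-separability of $\mathfrak{G}/(C(\mathfrak{G}))^{k}$) matches the paper exactly, and it correctly disposes of the case where $f\nsim_{\mathfrak{G}}g \pmod{(C(\mathfrak{G}))^{k}}$ for some $\mathfrak{P}(\mathcal{C})$\nobreakdash-number $k$. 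But your strategy for the remaining case contains a genuine gap, and in fact the guiding claim is false there. You propose to ``find a single $k$ such that the images of $g,g'$ in $\mathfrak{G}/(C(\mathfrak{G}))^{k}$ are still non-conjugate,'' and later assert that ``a non-conjugacy witnessed in $\mathfrak{G}$ is inherited by $\mathfrak{G}/(C(\mathfrak{G}))^{k}$ for suitable $k$.'' The hard case is precisely the one where this fails: it can happen that $f\nsim_{\mathfrak{G}}g$ and yet $f\sim_{\mathfrak{G}}g\pmod{(C(\mathfrak{G}))^{k}}$ for \emph{every} $\mathfrak{P}(\mathcal{C})$\nobreakdash-number $k$, so that no quotient by a power of the cyclic radical can separate the conjugacy classes. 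Your sketch (elliptic versus non-elliptic, ``divisibility conditions solvable by choosing $k$ large,'' compatibility of the conjugator with the action on $C(\mathfrak{G})/(C(\mathfrak{G}))^{k}$) does not supply a mechanism for this situation.

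The paper's resolution of that case uses a different target: after replacing $f$ by a conjugate one may assume $f=gh$ with $h\in(C(\mathfrak{G}))^{m}\setminus\{1\}$ for a large $\mathfrak{P}(\mathcal{C})$\nobreakdash-number $m$, and one then maps $\mathfrak{G}$ directly into $X_k$ via $\tau\chi_k$ (not through the quotient $\mathfrak{G}/(C(\mathfrak{G}))^{k}$ and Proposition~\ref{ep11}). Choosing $k$ large relative to $m$, $|\ell|$, and the $Z$\nobreakdash-component of $g\tau$ guarantees $g\tau\chi_k\ne(gh)\tau\chi_k\ne g^{-1}\tau\chi_k$. The decisive structural point, which is absent from your plan, is that $[g,c]=1$ for a generator $c$ of $C(\mathfrak{G})$: if instead $g^{-1}cg=c^{-1}$, then $gc^{2s}=c^{-s}gc^{s}$, so the congruence $f\sim_{\mathfrak{G}}g\pmod{(C(\mathfrak{G}))^{2}}$ would already force $f\sim_{\mathfrak{G}}g$, a contradiction. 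Commutation with $c$ places $g\tau\chi_k$ in the centralizer of $c\tau\chi_k$, which equals $Z_k$, and conjugacy classes of elements of $Z_k$ inside $X_k$ are just $\{u,u^{-1}\}$; hence $(gh)\tau\chi_k$ lies outside the conjugacy class of $g\tau\chi_k$ in $X_k\in\mathcal{FS}_{\mathfrak{P}(\mathcal{C})}$. Without an argument of this kind your proof does not close.
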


\begin{proof}
It~follows from~Proposition~\ref{ep10} that $\operatorname{Im}\Delta \subseteq \{1,-1\}$, all the~labels of~$\mathcal{L}(\Gamma)$ are $\mathfrak{P}(\mathcal{C})$\nobreakdash-num\-bers, and~if $\operatorname{Im}\Delta = \{1,-1\}$, then $2 \in \mathfrak{P}(\mathcal{C})$. Given $k \geqslant 1$, let $Z$, $Z_{k}$, $A$, $B$, $X$, $X_{k}$ and~$\tau$, $\tau_{k}$, $\chi_{k}$, $\eta_{k}$ be~the~groups and~the~homomorphisms from~Statements~3 and~4 of~Proposition~\ref{ep09}. By~Statement~2 of~the~same proposition, $\mu$~is~a~$\mathfrak{P}(\mathcal{C})$\nobreakdash-num\-ber. Therefore, $X_{k} \in \mathcal{FS}_{\mathfrak{P}(\mathcal{C})}$ if $k$ is~a~$\mathfrak{P}(\mathcal{C})$\nobreakdash-num\-ber.

Suppose that $f,g \in \mathfrak{G}$ and~$f \nsim_{\mathfrak{G}} g$. Since the~class~$\mathcal{FS}_{\mathfrak{P}(\mathcal{C})}$ is~closed under~taking subgroups, to~complete the~proof, it~suffices to~find a~homomorphism~$\sigma$ of~$\mathfrak{G}$ into~an~$\mathcal{FS}_{\mathfrak{P}(\mathcal{C})}$\nobreakdash-group~$\overline{\mathfrak{G}}$ such that $f\sigma \nsim_{\overline{\mathfrak{G}}} g\sigma$. Let us consider two cases.

\smallskip

\textit{Case~1.} $f \nsim_{\mathfrak{G}} g \pmod {(C(\mathfrak{G}))^{k}}$ for~some $\mathfrak{P}(\mathcal{C})$\nobreakdash-num\-ber $k \geqslant 1$.

\smallskip

By~Statement~4\nobreakdash-d of~Proposition~\ref{ep09}, the~quotient group $\mathfrak{G} / (C(\mathfrak{G}))^{k}$ is~an~extension of~a~free group by~the~group $(\mathfrak{G} / (C(\mathfrak{G}))^{k})\tau_{k}$. Since $(\mathfrak{G} / (C(\mathfrak{G}))^{k})\tau_{k} \leqslant X_{k} \in \mathcal{FS}_{\mathfrak{P}(\mathcal{C})}$ and~$\mathcal{FS}_{\mathfrak{P}(\mathcal{C})}$ is~a~root class, the~group $\mathfrak{G} / (C(\mathfrak{G}))^{k}$ is~conjugacy $\mathcal{FS}_{\mathfrak{P}(\mathcal{C})}$\nobreakdash-sepa\-ra\-ble due~to Proposition~\ref{ep11}. It~follows from~this fact and~the~relation $f\eta_{k} \nsim_{\mathfrak{G}\eta_{k}} g\eta_{k}$ that $\eta_{k}$ can be~continued to~the~desired homomorphism.

\smallskip

\textit{Case~2.} $f \sim_{\mathfrak{G}} g \pmod {(C(\mathfrak{G}))^{k}}$ for~any $\mathfrak{P}(\mathcal{C})$\nobreakdash-num\-ber $k \geqslant 1$.

\smallskip

Since $X$ is~a~splitting extension of~$Z$ by~$A$, the~element~$g\tau$ can be~written in~the~form $g\tau = az^{n}$ for~suitable $a \in A$ and~$n \in \mathbb{Z}$. Let $m$ be~a~$\mathfrak{P}(\mathcal{C})$\nobreakdash-num\-ber greater than~$2|n|$. By~the~supposition, $f \sim_{\mathfrak{G}} g \pmod {(C(\mathfrak{G}))^{m}}$ and,~therefore, $x^{-1}fx = gh$ for~some $x \in \mathfrak{G}$ and~$h \in (C(\mathfrak{G}))^{m} \setminus \{1\}$. Hence, we~can assume further without loss of~generality that $f = gh$. By~the~definition of~$\tau$, the~equality $h\tau = z^{m\ell}$ holds for~some $\ell \ne 0$. Let~us choose a~$\mathfrak{P}(\mathcal{C})$\nobreakdash-num\-ber~$k$ greater than~$2m|\ell|$ and~show that $g\tau\chi_{k} \ne (gh)\tau\chi_{k} \ne g^{-1}\tau\chi_{k}$.

Indeed,
\[
(g^{2}h)\tau = (az^{n})(az^{n})z^{m\ell} = a^{2}z^{n + \varepsilon n + m\ell},
\]
where $\varepsilon = 1$ if $a$ belongs to~the~centralizer of~$z$ in~$X$ and~$\varepsilon = -1$ otherwise. By~the~definition of~$\chi_{k}^{\vphantom{m}}$, the~equalities $h\tau\chi_{k}^{\vphantom{m}} = z_{k}^{m\ell}$ and~$(g^{2}h)\tau\chi_{k}^{\vphantom{m}} = z_{k}^{n + \varepsilon n + m\ell}$ hold. The~relations $m > 2|n|$, $k > 2m|\ell|$, and~$\ell \ne 0$ imply that $0 \ne |m\ell| < k/2 < \mu k$~and
\begin{multline*}
0 < m - 2|n| \leqslant m|\ell| - 2|n| \leqslant |n + \varepsilon n + m\ell| \leqslant\\
\leqslant m|\ell| + 2|n| < m|\ell| + m \leqslant 2m|\ell| < k \leqslant \mu k.
\end{multline*}
Since the~group~$Z_{k}$ is~of~order~$\mu k$, it~follows that $h\tau\chi_{k} \ne 1 \ne (g^{2}h)\tau\chi_{k}$ and,~therefore, $g\tau\chi_{k} \ne (gh)\tau\chi_{k} \ne g^{-1}\tau\chi_{k}$ as~required.

Let~us now show that the~conjugacy class~$(g\tau\chi_{k})^{X_{k}}$ of~$g\tau\chi_{k}$ in~$X_{k}$ is~contained in~the~set $\{g\tau\chi_{k}, g^{-1}\tau\chi_{k}\}$. Since $X_{k} \in \mathcal{FS}_{\mathfrak{P}(\mathcal{C})}$, it~will follow that $\tau\chi_{k}$ is~the~desired homomorphism.

If~$\operatorname{Im}\Delta = \{1\}$, then $X_{k}$ is~an~abelian group and,~therefore, $(g\tau\chi_{k})^{X_{k}} = \{g\tau\chi_{k}\}$. Suppose that $\operatorname{Im}\Delta = \{1,-1\}$. Let~us fix a~vertex $v \in \mathcal{V}$, denote by~$c$ the~element~$g_{v}^{\mu(v)}$ and~assume that $[g,c] \ne 1$.

Since $c$ generates the~infinite cyclic subgroup~$C(\mathfrak{G})$, which is~normal in~$\mathfrak{G}$, the~equality $g^{-1}cg = c^{-1}$ holds. It~follows that $g^{-1}c^{-1}g = c$, $c^{-1}gc = gc^{2}$, $cgc^{-1} = gc^{-2}$, and~therefore $c^{-r}gc^{r} = gc^{2r}$ for~any $r \in \mathbb{Z}$. The~relation $\operatorname{Im}\Delta = \{1,-1\}$ implies that $2 \in \mathfrak{P}(\mathcal{C})$. Hence, $f \sim_{\mathfrak{G}} g \pmod {(C(\mathfrak{G}))^{2}}$ and~$y^{-1}fy = gc^{2s}$ for~some $y \in \mathfrak{G}$ and~$s \in \mathbb{Z}$. Since $gc^{2s} = c^{-s}gc^{s}$, it~follows that the~elements $f$ and~$g$ are~conjugate in~$\mathfrak{G}$, and~we~get a~contradiction with~their choice.

Thus, $[g,c] = 1$ and,~therefore, the~element~$g\tau\chi_{k}$ belongs to~the~centralizer~$\mathcal{Z}_{X_{k}}(c\tau\chi_{k})$ of~$c\tau\chi_{k}$ in~$X_{k}$. Let~us note that, if $u \in Z_{k}$, then $w^{-1}uw = u$ and~$(bw)^{-1}u(bw) = u^{-1}$ for~any $w \in Z_{k}$. Hence, the~conjugacy class of~$u$ in~$X_{k}$ is~the~set $\{u, u^{-1}\}$, while the~centralizer of~$u$ is~either~$Z_{k}$ (if~$u^{2} \ne 1$) or~$X_{k}$ (otherwise). By~the~definitions of~$\tau$ and~$\chi_{k}$, the~equality $c\tau\chi_{k}^{\vphantom{\mu}} = z_{k}^{\mu}$ holds. Since the~group~$Z_{k}$ is~of~order~$\mu k$ and~$k > 2m|\ell| > 2$, it~follows that $(c\tau\chi_{k})^{2} \ne 1$. Hence, $g\tau\chi_{k} \in \mathcal{Z}_{X_{k}}(c\tau\chi_{k}) = Z_{k}$ and,~therefore, $(g\tau\chi_{k})^{X_{k}} = \{g\tau\chi_{k}, g^{-1}\tau\chi_{k}\}$ as~required.
\end{proof}

\begin{proof}[\textup{\textbf{Proof of~Theorem~\ref{et01}}}]
The~implication~$2 \Rightarrow 1$ is~obvious, the~implication~$3 \Rightarrow 2$ follows from~Proposition~\ref{ep04}. Let~us prove the~implication~$1 \Rightarrow 3$.

By~using elementary collapses and~admissible changes of~signs, we~can transform $\mathcal{L}(\Gamma)$ at~first to~a~reduced form, and~then, after choosing some maximal subtree~$\mathcal{T}$ of~$\Gamma$, to~a~$\mathcal{T}$\nobreakdash-posi\-tive form. Therefore, if~$\mathfrak{G}$ is~non-solv\-able, then the~considered implication follows from~Proposition~\ref{ep12}. If~$\mathfrak{G} \cong \mathrm{BS}(1,-1)$, the~implication~$1 \Rightarrow 3$ is~ensured by~Proposition~\ref{ep06}.

Let $\mathfrak{G} \cong \mathbb{Z}$ or~$\mathfrak{G} \cong \mathrm{BS}(1,1) \cong \mathbb{Z}\times \mathbb{Z}$. Then $\mathfrak{G}$ is~residually a~$\mathcal{C}$\nobreakdash-group if and~only~if it~is~conjugacy $\mathcal{C}$\nobreakdash-sepa\-ra\-ble because the~relations of~equality and~conjugacy coincide for~an~abelian group. Since the~image of~any homomorphism of~$\mathfrak{G}$ onto~a~group from~$\mathcal{C}$ is, obviously, an~$\mathcal{FS}_{\mathfrak{P}(\mathcal{C})}$\nobreakdash-group, it~follows that the~implication~$1 \Rightarrow 3$ also holds.
\end{proof}


\begin{thebibliography}{99}




\bibitem{BaumslagSolitar1962BAMS}
\textit{Baumslag\hspace{.7ex}G.\textup{,}\hspace{.9ex}Solitar\hspace{.7ex}D.}\hspace{1ex}Some two-generator one-relator non-Hopfian groups. Bull.\ Amer.\ Math.\ Soc.\ 68~(3), 199\nobreakdash--201 (1962).
DOI: \href{http://doi.org/10.1090/S0002-9904-1962-10745-9}{10.1090/S0002-9904-1962-10745-9}.

\bibitem{Beeker2015GGD}
\textit{Beeker\hspace{.7ex}B.}\hspace{1ex}Multiple conjugacy problem in~graphs of~free abelian groups. Groups Geom.\ Dyn.\ 9~(1), 1\nobreakdash--27 (2015). DOI:~\href{http://doi.org/10.4171/GGD/303}{10.4171/GGD/303}.

\bibitem{ChagasZalesskii2015AM}
\textit{Chagas\hspace{.7ex}S.\hspace{.5ex}C.\textup{,}\hspace{.9ex}Zalesskii\hspace{.7ex}P.\hspace{.5ex}A.}\hspace{1ex}Subgroup conjugacy separability of~free-by-finite groups. Arch.\ Math.\ 104~(2), 101\nobreakdash--109 (2015). DOI~\href{http://doi.org/10.1007/s00013-015-0727-8}{10.1007/s00013-015-0727-8}.

\bibitem{ClayForester2008AGT}
\textit{Clay\hspace{.7ex}M.\textup{,}\hspace{.9ex}Forester\hspace{.7ex}M.}\hspace{1ex}On~the~isomorphism problem for~generalized Baumslag--Solitar groups. Algebr.\ Geom.\ Topol.\ 8~(4), 2289\nobreakdash--2322 (2008).
DOI:~\href{http://doi.org/10.2140/agt.2008.8.2289}{10.2140/agt.2008.8.2289}.

\bibitem{CornulierValette2015GD}
\textit{Cornulier\hspace{.7ex}Y.\textup{,}\hspace{.9ex}Valette\hspace{.7ex}A.}\hspace{1ex}On~equivariant embeddings of~generalized Baumslag--Solitar groups. Geom.\ Dedicata 175, 385\nobreakdash--401 (2015). DOI:~\href{http://doi.org/10.1007/s10711-014-9953-7}{10.1007/s10711-014-9953-7}.

\bibitem{DelgadoRobinsonTimm2011JPAA}
\textit{Delgado\hspace{.7ex}A.\hspace{.5ex}L.\textup{,}\hspace{.9ex}Robinson\hspace{.7ex}D.\hspace{.5ex}J.\hspace{.5ex}S.\textup{,}\hspace{.9ex}Timm\hspace{.7ex}M.}\hspace{1ex}Generalized Baumslag--Solitar groups and~geometric homomorphisms. J.~Pure Appl.\ Algebra 215~(4), 398\nobreakdash--410 (2011). DOI:~\href{http://doi.org/10.1016/j.jpaa.2010.04.025}{10.1016/j.jpaa.2010.04.025}.

\bibitem{DelgadoRobinsonTimm2014AC}
\textit{Delgado\hspace{.7ex}A.\hspace{.5ex}L.\textup{,}\hspace{.9ex}Robinson\hspace{.7ex}D.\hspace{.5ex}J.\hspace{.5ex}S.\textup{,}\hspace{.9ex}Timm\hspace{.7ex}M.}\hspace{1ex}Generalized Baumslag--Solitar graphs with~soluble fundamental groups. Algebra Colloq.\ 21~(1), 53\nobreakdash--58 (2014). DOI:~\href{http://doi.org/10.1142/S1005386714000030}{10.1142/S1005386714000030}.

\bibitem{DelgadoRobinsonTimm2017CA}
\textit{Delgado\hspace{.7ex}A.\hspace{.5ex}L.\textup{,}\hspace{.9ex}Robinson\hspace{.7ex}D.\hspace{.5ex}J.\hspace{.5ex}S.\textup{,}\hspace{.9ex}Timm\hspace{.7ex}M.}\hspace{1ex}Cyclic normal subgroups of~generalized Baumslag--Soli\-tar groups. Comm.\ Algebra 45~(4), 1808\nobreakdash--1818 (2017). DOI:~\href{http://doi.org/10.1080/00927872.2016.1226859}{10.1080/00927872.2016.1226859}.

\bibitem{DelgadoRobinsonTimm2018CAG}
\textit{Delgado\hspace{.7ex}A.\hspace{.5ex}L.\textup{,}\hspace{.9ex}Robinson\hspace{.7ex}D.\hspace{.5ex}J.\hspace{.5ex}S.\textup{,}\hspace{.9ex}Timm\hspace{.7ex}M.}\hspace{1ex}$3$\nobreakdash-manifolds and~generalized
Baumslag--Solitar groups. Comm.\ Anal.\ Geom.\ 26~(3), 571\nobreakdash--584 (2018). DOI:~\href{http://doi.org/10.4310/CAG.2018.v26.n3.a4}{10.4310/CAG.2018.v26.n3.a4}.

\bibitem{Dudkin2015JGT}
\textit{Dudkin\hspace{.7ex}F.\hspace{.5ex}A.}\hspace{1ex}On~the~embedding problem for~generalized Baumslag--Solitar groups. J.~Group Theory 18~(4), 655\nobreakdash--684 (2015). DOI:~\href{http://doi.org/10.1515/jgth-2014-0050}{10.1515/jgth-2014-0050}.

\bibitem{Dudkin2016AL}
\textit{Dudkin\hspace{.7ex}F.\hspace{.5ex}A.}\hspace{1ex}The~centralizer dimension of~generalized Baumslag--Solitar groups. Algebra Logic 55~(5), 403\nobreakdash--406 (2016). DOI:~\href{http://doi.org/10.1007/s10469-016-9412-7}{10.1007/s10469-016-9412-7}.

\bibitem{Dudkin2017AL}
\textit{Dudkin\hspace{.7ex}F.\hspace{.5ex}A.}\hspace{1ex}The~isomorphism problem for~generalized Baumslag--Solitar groups with~one mobile edge. Algebra Logic 56~(3), 197\nobreakdash--209 (2017). DOI:~\href{http://doi.org/10.1007/s10469-017-9440-y}{10.1007/s10469-017-9440-y}.

\bibitem{Dudkin2018SMJ}
\textit{Dudkin\hspace{.7ex}F.\hspace{.5ex}A.}\hspace{1ex}On~the~centralizer dimension and~lattice of~generalized Baumslag--Solitar groups. Sib.\ Math.~J.\ 59~(3), 403\nobreakdash--414 (2018). DOI:~\href{http://doi.org/10.1134/S0037446618030035}{10.1134/S0037446618030035}.

\bibitem{Dudkin2020AM}
\textit{Dudkin\hspace{.7ex}F.\hspace{.5ex}A.}\hspace{1ex}$\mathcal{F}_{\pi}$\nobreakdash-residuality of~generalized Baumslag--Solitar groups. Arch.\ Math.\ 114, 129\nobreakdash--134 (2020). DOI:~\href{http://doi.org/10.1007/s00013-019-01404-8}{10.1007/s00013-019-01404-8}.

\bibitem{Dudkin2020AL}
\textit{Dudkin\hspace{.7ex}F.\hspace{.5ex}A.}\hspace{1ex}Universal equivalence of~generalized Baumslag--Solitar groups. Algebra Logic 59~(5), 357\nobreakdash--366 (2020). DOI:~\href{http://doi.org/10.1007/s10469-020-09609-5}{10.1007/s10469-020-09609-5}.

\bibitem{Dudkin2021CA}
\textit{Dudkin\hspace{.7ex}F.\hspace{.5ex}A.}\hspace{1ex}Finite index subgroups in~non-large generalized Baumslag--Solitar groups. Comm.\ Algebra 49~(9), 3736\nobreakdash--3742 (2021). DOI:~\href{http://doi.org/10.1080/00927872.2021.1904969}{10.1080/00927872.2021.1904969}.

\bibitem{Dudkin2022AL}
\textit{Dudkin\hspace{.7ex}F.\hspace{.5ex}A.}\hspace{1ex}Group and~algorithmic properties of~generalized Baumslag--Solitar groups. Algebra Logic 61~(3), 230\nobreakdash--237 (2022). DOI:~\href{http://doi.org/10.1007/s10469-022-09691-x}{10.1007/s10469-022-09691-x}.

\bibitem{DudkinMamontov2020JKT}
\textit{Dudkin\hspace{.7ex}F.\hspace{.5ex}A.\textup{,}\hspace{.9ex}Mamontov\hspace{.7ex}A.\hspace{.5ex}S.}\hspace{1ex}On~knot groups acting on~trees. J.~Knot Theory Ramif.\ 29~(9), Article ID 2050062 (2020). DOI:~\href{http://doi.org/10.1142/S0218216520500625}{10.1142/S0218216520500625}.

\bibitem{DudkinYan2023SMJ}
\textit{Dudkin\hspace{.7ex}F.\hspace{.5ex}A.\textup{,}\hspace{.9ex}Yan\hspace{.7ex}N.}\hspace{1ex}Torsion in~the~outer automorphism groups of~generalized Baumslag--Solitar groups. Sib.\ Math.~J.\ 64~(1), 67\nobreakdash--75 (2023). DOI:~\href{http://doi.org/10.1134/S0037446623010081}{10.1134/S0037446623010081}.

\bibitem{Ferov2016JA}
\textit{Ferov\hspace{.7ex}M.}\hspace{1ex}On~conjugacy separability of~graph products of~groups. J.~Algebra 447, 135\nobreakdash--182 (2016). DOI:~\href{http://doi.org/10.1016/j.jalgebra.2015.08.027}{10.1016/j.jalgebra.2015.08.027}.

\bibitem{Forester2002GT}
\textit{Forester\hspace{.7ex}M.}\hspace{1ex}Deformation and~rigidity of~simplicial group actions on~trees. Geom.\ Topol.\ 6, 219\nobreakdash--267 (2002). DOI:~\href{http://doi.org/10.2140/gt.2002.6.219}{10.2140/gt.2002.6.219}.

\bibitem{Forester2003CMH}
\textit{Forester\hspace{.7ex}M.}\hspace{1ex}On~uniqueness of~JSJ decompositions of~finitely generated groups. Comment.\ Math.\ Helv.\ 78~(4), 740\nobreakdash--751 (2003). DOI:~\href{http://doi.org/10.1007/s00014-003-0780-y}{10.1007/s00014-003-0780-y}.

\bibitem{Forester2006GD}
\textit{Forester\hspace{.7ex}M.}\hspace{1ex}Splittings of~generalized Baumslag--Solitar groups. Geom.\ Dedicata 121, 43\nobreakdash--59 (2006).
DOI:~\href{http://doi.org/10.1007/s10711-006-9085-9}{10.1007/s10711-006-9085-9}.

\bibitem{GandiniMeinertRuping2015GGD}
\textit{Gandini\hspace{.7ex}G.\textup{,}\hspace{.9ex}Meinert\hspace{.7ex}S.\textup{,}\hspace{.9ex}R\"uping\hspace{.7ex}H.}\hspace{1ex}The~Farrell--Jones conjecture for~fundamental groups of~graphs of~abelian groups. Groups Geom.\ Dyn.\ 9~(3), 783\nobreakdash--792 (2015). DOI:~\href{http://doi.org/10.4171/GGD/327}{10.4171/GGD/327}.

\bibitem{Gruenberg1957PLMS}
\textit{Gruenberg\hspace{.7ex}K.\hspace{.5ex}W.}\hspace{1ex}Residual properties of~infinite soluble groups. Proc.\ London Math.\ Soc.\ s3\nobreakdash-7~(1), 29\nobreakdash--62 (1957). DOI:~\href{http://doi.org/10.1112/plms/s3-7.1.29}{10.1112/plms/s3-7.1.29}.

\bibitem{Hall1954PLMS}
\textit{Hall\hspace{.7ex}P.}\hspace{1ex}The~splitting properties of~relatively free groups. Proc.\ Lond.\ Math.\ Soc.\ s3\nobreakdash-4~(1), 343\nobreakdash--356 (1954). DOI:~\href{http://doi.org/10.1112/plms/s3-4.1.343}{10.1112/plms/s3-4.1.343}.

\bibitem{Ivanova2004MN}
\textit{Ivanova\hspace{.7ex}E.\hspace{.5ex}A.}\hspace{1ex}On~conjugacy $p$\nobreakdash-sepa\-ra\-bil\-ity of~free products of~two groups with~amalgamation. Math.\ Notes 76~(4), 465\nobreakdash--471 (2004). DOI:~\href{http://doi.org/10.1023/B:MATN.0000043476.39676.01}{10.1023/B:MATN.0000043476.39676.01}.

\bibitem{Ivanova2005BIvSU}
\textit{Ivanova\hspace{.7ex}E.\hspace{.5ex}A.}\hspace{1ex}The~conjugacy $p$\nobreakdash-sepa\-ra\-bil\-ity of~free products of~two groups. Bull.\ Ivanovo State Univ.\ 6~(3), 83\nobreakdash--91 (2005) [in~Russian].

\bibitem{Kropholler1990CMH}
\textit{Kropholler\hspace{.7ex}P.\hspace{.5ex}H.}\hspace{1ex}A~note on~centrality in~$3$\nobreakdash-manifold groups. Math.\ Proc.\ Camb.\ Phil.\ Soc.\ 107~(2), 261\nobreakdash--266 (1990). DOI:~\href{http://doi.org/10.1017/S0305004100068523}{10.1017/S0305004100068523}.

\bibitem{Levitt2007GT}
\textit{Levitt\hspace{.7ex}G.}\hspace{1ex}On~the~automorphism group of~generalized Baumslag--Solitar groups. Geom.\ Topol.\ 11, 473\nobreakdash--515 (2007). DOI:~\href{http://doi.org/10.2140/gt.2007.11.473}{10.2140/gt.2007.11.473}.

\bibitem{Levitt2015JGT}
\textit{Levitt\hspace{.7ex}G.}\hspace{1ex}Quotients and~subgroups of~Baumslag--Solitar groups. J.~Group Theory 18~(1), 1\nobreakdash--43 (2015). DOI:~\href{http://doi.org/10.1515/jgth-2014-0028}{10.1515/jgth-2014-0028}.

\bibitem{Meinert2015AGT}
\textit{Meinert\hspace{.7ex}S.}\hspace{1ex}The~Lipschitz metric on~deformation spaces of~$G$\nobreakdash--trees. Algebr.\ Geom.\ Topol.\ 15~(2), 987\nobreakdash--1029 (2015). DOI:~\href{http://doi.org/10.2140/agt.2015.15.987}{10.2140/agt.2015.15.987}.

\bibitem{Meskin1972TAMS}
\textit{Meskin\hspace{.7ex}S.}\hspace{1ex}Nonresidually finite one-relator groups. Trans.\ Amer.\ Math.\ Soc.\ 164, 105\nobreakdash--114 (1972). DOI: \href{http://doi.org/10.1090/S0002-9947-1972-0285589-5}{10.1090/S0002-9947-1972-0285589-5}.

\bibitem{Moldavanskii2007BIvSU}
\textit{Moldavanskii\hspace{.7ex}D.\hspace{.5ex}I.}\hspace{1ex}On~the~conjugacy $p$\nobreakdash-sepa\-ra\-bil\-ity of~certain one-relator groups. Bull.\ Ivanovo State Univ.\ 8~(3) 89\nobreakdash--94 (2007) [in~Russian].

\bibitem{Moldavanskii2018CA}
\textit{Moldavanskii\hspace{.7ex}D.\hspace{.5ex}I.}\hspace{1ex}On~the~residual properties of~Baumslag--Solitar groups. Comm. Algebra 46~(9), 3766\nobreakdash--3778 (2018). DOI:~\href{http://doi.org/10.1080/00927872.2018.1424867}{10.1080/00927872.2018.1424867}.

\bibitem{RibesZalesskii2016JGT}
\textit{Ribes\hspace{.7ex}L.\textup{,}\hspace{.9ex}Zalesskii\hspace{.7ex}P.\hspace{.5ex}A.}\hspace{1ex}Conjugacy distinguished subgroups. J.~Group Theory.\ 19~(3), 477\nobreakdash--495 (2016). DOI:~\href{http://doi.org/10.1515/jgth-2016-0502}{10.1515/jgth-2016-0502}.

\bibitem{Robinson2010NM}
\textit{Robinson\hspace{.7ex}D.\hspace{.5ex}J.\hspace{.5ex}S.}\hspace{1ex}Recent results on~generalized Baumslag--Solitar groups. Note Mat.\ 30~(1), 37\nobreakdash--53 (2010). DOI:~\href{http://doi.org/10.1285/i15900932v30n1supplp37}{10.1285/i15900932v30n1supplp37}.

\bibitem{Robinson2011RSMUP}
\textit{Robinson\hspace{.7ex}D.\hspace{.5ex}J.\hspace{.5ex}S.}\hspace{1ex}The~Schur multiplier of~a~generalized Baumslag--Solitar group. Rend.\ Sem.\ Mat.\ Univ.\ Padova 125, 207\nobreakdash--216 (2011). URL:~\href{http://www.numdam.org/item/RSMUP_2011__125__207_0/}{www.numdam.org/item/RSMUP\_2011\_\_125\_\_207\_0/}.

\bibitem{Serre1980}
\textit{Serr\hspace{.7ex}J.\nobreakdash-P.}\hspace{1ex}Trees (Springer-Verlag, Berlin, Heidelberg, New York, 1980).\newline DOI:~\href{http://doi.org/10.1007/978-3-642-61856-7}{10.1007/978-3-642-61856-7}.

\bibitem{Sokolov2015CA}
\textit{Sokolov\hspace{.7ex}E.\hspace{.5ex}V.}\hspace{1ex}A~characterization of~root classes of~groups. Comm.\ Algebra 43~(2), 856\nobreakdash--860 (2015). DOI:~\href{http://doi.org/10.1080/00927872.2013.851207}{10.1080/00927872.2013.851207}.

\bibitem{Sokolov2015MN}
\textit{Sokolov\hspace{.7ex}E.\hspace{.5ex}V.}\hspace{1ex}On~the~conjugacy separability of~some free constructions of~groups by~root classes of~finite groups. Math.\ Notes 97~(5), 779\nobreakdash--790 (2015). DOI:~\href{http://doi.org/10.1134/S0001434615050132}{10.1134/S0001434615050132}.

\bibitem{Sokolov2021JA}
\textit{Sokolov\hspace{.7ex}E.\hspace{.5ex}V.}\hspace{1ex}Certain residual properties of~generalized Baumslag--Solitar groups. J.~Algebra 582, 1\nobreakdash--25 (2021). DOI:~\href{http://doi.org/10.1016/j.jalgebra.2021.05.001}{10.1016/j.jalgebra.2021.05.001}.

\bibitem{Sokolov2022CA}
\textit{Sokolov\hspace{.7ex}E.\hspace{.5ex}V.}\hspace{1ex}Certain residual properties of~HNN-ex\-ten\-sions with~central associated subgroups. Comm. Algebra 50~(3), 962\nobreakdash--987 (2022). DOI:~\href{http://doi.org/10.1080/00927872.2021.1976791}{10.1080/00927872.2021.1976791}.

\bibitem{Sokolov2023JGT}
\textit{Sokolov\hspace{.7ex}E.\hspace{.5ex}V.}\hspace{1ex}On~the~separability of~subgroups of~nilpotent groups by~root classes of~groups. J.~Group Theory 26~(4), 751\nobreakdash--777 (2023). DOI:~\href{http://doi.org/10.1515/jgth-2022-0021}{10.1515/jgth-2022-0021}.

\bibitem{Sokolov2024SMJ}
\textit{Sokolov\hspace{.7ex}E.\hspace{.5ex}V.}\hspace{1ex}On~the~separability of~abelian subgroups of~the~fundamental groups of~graphs of~groups.~II. Sib.\ Math.~J.\ 65~(1), 174\nobreakdash--189 (2024). DOI:~\href{http://doi.org/10.1134/S0037446624010166}{10.1134/S0037446624010166}.

\bibitem{SokolovTumanova2020LJM}
\textit{Sokolov\hspace{.7ex}E.\hspace{.5ex}V.\hspace{.9ex}Tumanova\hspace{.7ex}E.\hspace{.5ex}A.}\hspace{1ex}To~the~question of~the~root-class residuality of~free constructions of~groups. Lobachevskii J.~Math.\ 41, 260\nobreakdash--272 (2020). DOI:~\href{http://doi.org/10.1134/S1995080220020158}{10.1134/S1995080220020158}.

\bibitem{Tumanova2017SMJ}
\textit{Tumanova\hspace{.7ex}E.\hspace{.5ex}A.}\hspace{1ex}The~root class residuality of~Baumslag--Solitar groups. Sib.\ Math.~J.\ 58~(3), 546\nobreakdash--552 (2017). DOI:~\href{http://doi.org/10.1134/S003744661703017X}{10.1134/S003744661703017X}.

\bibitem{Tumanova2019SMJ}
\textit{Tumanova\hspace{.7ex}E.\hspace{.5ex}A.}\hspace{1ex}The~root class residuality of~the~tree product of~groups with~amalgamated retracts. Sib.\ Math.~J.\ 60~(4), 699\nobreakdash--708 (2019). DOI:~\href{http://doi.org/10.1134/S0037446619040153}{10.1134/S0037446619040153}.

\bibitem{Tumanova2022ACCS}
\textit{Tumanova\hspace{.7ex}E.\hspace{.5ex}A.}\hspace{1ex}Computational analysis of~quantitative characteristics of~some residual properties of~solvable Baumslag--Solitar groups. Aut.\ Control Comp.\ Sci.\ 56~(7), 800\nobreakdash--806 (2022). DOI: \href{http://doi.org/10.3103/S0146411622070203}{10.3103/S0146411622070203}.






\end{thebibliography}
\end{document}